\numberwithin{equation}{section}
\newcommand{\minimize}{\textrm{minimize}}
\newcommand{\R}{\mathbb{R}}
\newcommand{\calM}{\mathcal{M}}
\newcommand{\calO}{\mathcal{O}}
\definecolor{purple}{rgb}{0.74, 0.2, 0.64}
\newcommand{\D}{\mathrm{D}}
\newcommand{\calL}{\mathcal{L}}
\newcommand{\transpose}{^\top}
\newcommand{\T}{^\top}
\newcommand{\hess}{\nabla^2}
\newcommand{\Rm}{\mathbb{R}^m}
\newcommand{\Rn}{\mathbb{R}^n}
\newcommand{\Rmn}{\mathbb{R}^{m\times n}}
\newcommand{\rank}{\operatorname{rank}}
\newcommand{\I}{\mathrm{I}}
\newcommand{\sigmamin}{\sigma_\mathrm{min}}
\newcommand{\sigmabar}{\underline{\sigma}}
\newcommand{\argmin}{\operatorname{argmin}}
\newcommand{\kplus}{_{k+1}}
\newcommand{\inv}{^{-1}}
\newcommand{\invv}[1]{^{-#1}}
\newcommand{\aref}[1]{\hyperref[#1]{A\ref{#1}}}
\newcommand{\norm}[1]{\left\|#1\right\|}
\newtheorem{theorem}			     {Theorem}	[section]
\newtheorem{lemma}	      [theorem]  {Lemma}		
\newtheorem{definition}	         {Definition}[section]
\newtheorem{assumption} {A\ignorespaces}
\newtheorem{remark}			     {Remark}	[section]
\definecolor{listinggray}{gray}{0.9}
\definecolor{lbcolor}{rgb}{0.9,0.9,0.9}
\newlength{\dhatheight}
\newcommand{\Flow}{F_{\mathrm{low}}}
\newcommand{\flow}{f_{\mathrm{low}}}
\newcommand{\Tlimit}{T(\varepsilon_0)}
\newcommand{\indexTlimit}{_{T(\varepsilon_0)}}
\newcommand{\LICQregion}{\mathcal{C}_{R}}
\newcommand{\sublevelset}{\calL_f(2f(x_0) - \flow)}
\newcommand{\betamax}{\beta_{\mathrm{max}}}
\newcommand{\conv}{\operatorname{conv}}
\newcommand{\Tsinglehat}{\hat{T}(\varepsilon_0)}
\newcommand{\Tdoublehat}{\tilde{T} (\varepsilon_0)}
\newcommand{\Lipf}{L_{f,0}}
\newcommand{\Lipgradf}{L_{f,1}}
\newcommand{\Liphessf}{L_{f,2}}
\newcommand{\LipgradP}{L_{\phi,1}}
\newcommand{\LiphessP}{L_{\phi,2}}
\newcommand{\xkp}{x_{k+1}}
\newcommand{\Omegastar}{\Omega_{*}}
\newcommand{\Deltaf}{(f(x_0)- \flow)}
\newcommand{\taucap}{\tau_{\mathrm{cap}}}
\newcommand{\tcalO}{\tilde{\mathcal{O}}}
\pgfplotsset{compat=1.17}
\title{Complexity of quadratic penalty methods with adaptive accuracy under a
PL condition for the constraints}
\author[1]{Florentin Goyens\thanks{Corresponding author: \url{florentin.goyens@uclouvain.be}. Funding for F.Goyens was provided by the Fonds de la Recherche
Scientifique  FNRS under Grant T.0001.23.}}
\author[1]{Geovani N. Grapiglia}
\affil[1]{ICTEAM Institute, UCLouvain, Louvain-la-Neuve, Belgium}
\date{\today}
\begin{document}
\maketitle

\begin{abstract}
We study the quadratic penalty method (QPM) for smooth nonconvex optimization
problems with equality constraints.
Assuming the constraint violation satisfies the PL condition near the feasible
set, we derive sharper worst-case
complexity bounds for obtaining approximate first-order KKT points. When the
objective and constraints
are twice continuously differentiable, we show that QPM equipped with a
suitable first-order inner solver
requires at most $\mathcal{O}(\varepsilon_{0}^{-1}\varepsilon_{1}^{-2})$
first-order oracle calls to
find an $(\varepsilon_{0},\varepsilon_{1})$-approximate KKT point---that is, a
point
that is $\varepsilon_{0}$-approximately feasible and
$\varepsilon_{1}$-approximately stationary.
Furthermore, when the objective and constraints are three times continuously
differentiable, we show that
QPM with a suitable second-order inner solver
requires at most
$\mathcal{O}\left(\varepsilon_{0}^{-1/2}\varepsilon_{1}^{-3/2}\right)$
second-order oracle calls to find an
$(\varepsilon_{0},\varepsilon_{1})$-approximate KKT point.
We also introduce an adaptive, feasibility-aware stopping criterion for the
subproblems, which relaxes
the stationarity tolerance when far from feasibility. This rule preserves all
theoretical guarantees while
substantially reducing computational effort in practice.\newline
\textbf{Keywords:} Quadratic penalty method, worst-case complexity,
second-order methods, adaptive subproblem tolerance.\newline
\textbf{AMS 2020 Subject classifications:} 	90C30, 65K05.  	
\end{abstract}

\tableofcontents

\section{Introduction}

We consider the equality-constrained optimization problem
\begin{equation}\label{eq:P}\tag{P}
\underset{x \in \mathbb{R}^n}{\minimize} \;f(x) \textrm{ subject to } c(x) = 0,
\end{equation}
where $f\colon\mathbb{R}^n \to \mathbb{R}$ and $c\colon\mathbb{R}^n \to
\mathbb{R}^m$ are smooth and possibly nonconvex functions with $m \leq n$.
The idea of penalty methods is to transform~\eqref{eq:P} into a sequence of
unconstrained optimization problems. At its $k$th iteration, the Quadratic
Penalty Method (QPM) computes $\xkp$ by approximately minimizing the penalized
objective
\begin{equation}\label{eq:Q}
    Q_{\beta_k}(x) = f(x) + \dfrac{\beta_k}{2}\norm{c(x)}^2,
\end{equation}
where \(\beta_k > 0\) is a penalty parameter that determines the weight given
to the constraint violation. Each inner minimization (also called subproblem)
uses an increasingly larger value of $\beta_k$.
This raises the cost of infeasibility, thereby pushing iterates closer to the
feasible region. As \(\beta_k \to \infty\), minimizers of $Q_{\beta_k}$ ideally
approach feasible points of the constrained problem~\eqref{eq:P}. The quadratic
penalty method has a long history
\citep{zangwill1967NonLinear,bertsekas1997Nonlinear} and is an important
topic in nonconvex optimization, in part because it serves as a building block
for augmented Lagrangian methods.

Traditionally, the theoretical analysis of optimization methods has focused on
asymptotic convergence—that is, identifying conditions under which the iterates
converge (or have accumulation points converging) to a KKT point of
problem~\eqref{eq:P}. More recently, attention has shifted toward
\emph{worst-case complexity}, which aims to answer the following question:
\begin{quote}\emph{
How many iterations (and oracle calls) are required, in the worst case,
to generate an approximate KKT point?}
\end{quote}
Given positive tolerances \(\varepsilon_0\) and \( \varepsilon_1\), an \((\varepsilon_0,
\varepsilon_1)\)-approximate KKT point for~\eqref{eq:P} is a point $x\in \Rn$ such that 
there exists $\lambda\in \Rm$ satisfying
\begin{align}\label{eq:focp}\tag{$(\varepsilon_0,\varepsilon_1)$-KKT}
    \|c(x)\| &\leq \varepsilon_0, & \text{and} && 
    \| \nabla f(x) - \sum_{i=1}^m \lambda_i \nabla c_i(x)\| \leq \varepsilon_1.
\end{align}
Worst-case complexity results are typically expressed in terms of oracle calls. 
A call to a \emph{first-order oracle} returns the values of $f$, $c$, and their first-order derivatives at a given point $x\in \Rn$. 
A \emph{second-order oracle} additionally provides second-order derivatives. 
Higher-order oracles are also possible, but are not considered here. 

In the context of QPM, the \emph{outer iteration complexity} refers to the number
of subproblems solved (or equivalently, the number of updates of the penalty
parameter) before reaching an~\ref{eq:focp} point. At each outer iteration, an
unconstrained optimization method approximately minimizes the penalized
function \( Q_{\beta_k} \), using either a first- or second-order method, depending on the available oracle. 

The \emph{inner evaluation complexity} is the
number of oracle calls required to approximately minimize 
a single subproblem. 
These two notions come together in the
\emph{total evaluation complexity}, which is the total
number of oracle calls required to find
an~\ref{eq:focp} point.

Understanding and improving these bounds is central to the design of
optimization algorithms, as	the insight derived from the complexity analysis
may give clues to design methods with better practical performance.

\subsection{Contributions}

Our work improves complexity bounds of QPM for obtaining approximate
first-order KKT points of~\eqref{eq:P}.
We analyze a simple, practical version of QPM
(Algorithm~\ref{algo_quadratic_penalty}), without introducing any auxiliary
mechanisms designed to facilitate the theoretical analysis.

Our main contributions are as follows:
\begin{enumerate}
\item We introduce an adaptive \emph{feasibility-aware tolerance} for the
subproblems,  which relaxes the stationarity requirement when far from
feasibility, thus avoiding wasted computation in early iterations (Definition~\ref{def:feas_tol}). This
substantially improves practical performance.
\item We improve the \emph{outer iteration complexity} bound of QPM.
If the constraint violation satisfies the Polyak–Łojasiewicz (PL) condition
near the feasible set~(Assumption~\aref{assu_LICQ}), we show that $\beta_k \geq
\calO(\varepsilon_0\inv)$ ensures $\norm{c(x_{k+1})}\leq \varepsilon_0$
(Lemma~\ref{lemma_tight_upper_bound_beta}), thus improving on the existing
$\beta_k \geq \calO(\varepsilon_0\invv{2})$
bound~\citep{grapiglia2023Worstcase}. 
The PL condition is a mainstream assumption in constrained optimization,
appearing under various names. In much of the literature, it is assumed
to hold globally or at all iterates; here, we only require it to hold 
 in a neighborhood of the feasible set.
\item We establish \emph{total evaluation complexity} bounds for QPM.
Under~\aref{assu_LICQ}, when the objective and constraints are twice
continuously differentiable, we show that QPM equipped with a suitable
first-order inner solver requires at most
$\tcalO(\varepsilon_{0}^{-1}\varepsilon_{1}^{-2})$ first-order oracle calls to
find an~\ref{eq:focp} point. Furthermore, under~\aref{assu_LICQ}, when the
objective and constraints are three times continuously differentiable, we show
that QPM with a suitable second-order inner solver achieves an~\ref{eq:focp}
point in at most
$\tcalO\left(\varepsilon_{0}^{-1/2}\varepsilon_{1}^{-3/2}\right)$ second-order
oracle calls.
We do not assume Lipschitz continuity of the derivatives of $f$ and $c$ on a
set that contains the iterates and trial points, which may be difficult to ensure a priori. Instead, we only assume that $f$ has a bounded
sublevel set (\aref{assu_bounded_sublevelsets}).
 \end{enumerate}
The table below summarizes the complexity results
established in this paper.
    
    \begin{table}[!ht]
\centering
\renewcommand{\arraystretch}{1.2}
\begin{tabular}{|l|c|c|}
\hline
QPM: target~\ref{eq:focp} & Without PL & Under PL (\aref{assu_LICQ}) \\
\hline
Penalty parameter $\beta_{\max}$
& $\displaystyle \mathcal{O}\!\left(\varepsilon_0^{-2}\right)^*$
& $\displaystyle \mathcal{O}\! \left(\varepsilon_0^{-1}\right)$ \\[1.0ex]
\hline
Outer iteration complexity
&$\mathcal{O}\!\left(\log\!\left(\beta_0\inv\varepsilon_0^{-2}\right)\right)^*$
& $\mathcal{O}\!\left(\log\!\left(\beta_0\inv\varepsilon_0^{-1}\right)\right)$
\\[1.0ex]
\hline
\makecell{Total evaluation complexity\\  with first-order inner solver
(\aref{assu_A1})}
& $\displaystyle \calO \!\left(\log\!	\left(\beta_0\inv
\varepsilon_0^{-2}\right)	\varepsilon_0^{-2}\,\varepsilon_1^{-2}\right)^*$
& $\displaystyle \calO \!\left(\log\!	\left(\beta_0\inv \varepsilon_0^{-1}\right)
\varepsilon_0^{-1}\,\varepsilon_1^{-2}\right)$ \\[0.5ex]
\hline
\makecell{Total evaluation complexity\\  with second-order inner solver
(\aref{assu_A2})}
& $\displaystyle \calO \!\left(\log\!	\left(\beta_0\inv
\varepsilon_0^{-2}\right)	\varepsilon_0^{-1}\,\varepsilon_1^{-3/2}\right)$
& $\displaystyle \calO \! \left(\log\!	\left(\beta_0\inv \varepsilon_0^{-1}\right)
\varepsilon_0^{-1/2}\,\varepsilon_1^{-3/2}\right)$ \\[0.5ex]

\hline
\end{tabular}
\caption{Summary of our complexity results for the quadratic penalty method,
with and without the PL condition on the constraint violation. $^*$Already appears in~\citep{grapiglia2023Worstcase} for QPM with constant subproblem tolerance, i.e., $\tau(x) \equiv \varepsilon_1$ in~\eqref{eq:first_order_condition}.}
\label{tab:intro_complexity_summary}
\end{table}

\FloatBarrier

\subsection{Related literature}

The worst-case evaluation complexity of methods for constrained nonconvex
problems
has been widely studied in recent years~\citep{cartis2022evaluation}.
For first-order schemes,~\citet{cartis2011evaluation}
analyzed an exact penalty method,
showing a bound of at most $\mathcal{O}(\varepsilon^{-5})$ problem evaluations
to reach an
$\varepsilon$-KKT point, which corresponds to an~\ref{eq:focp} point with
$\varepsilon=\varepsilon_{0}=\varepsilon_1$ in our definition. This rate
improves to $\mathcal{O}(\varepsilon^{-2})$
under the assumption that the penalty parameters remain bounded.
Comparable $\mathcal{O}(\varepsilon^{-2})$ bounds were obtained for a
two-phase method~\citep{cartis2014Complexity}, inexact restoration
methods~\citep{bueno2020Complexity}, and using Fletcher's augmented
Lagrangian~\citep{goyens2024computing}. 
For sequential quadratic programming schemes, 
\citet{facchinei2021Ghost} established 
an $\mathcal{O}(\varepsilon^{-4})$ bound
and~\citet{curtis2024Worstcase} later showed 
an $\calO(\varepsilon\invv{2})$
bound. 

Second-order methods offer improved complexity:
adaptive cubic regularization and trust-funnel schemes attain a worst-case rate
of
$\mathcal{O}(\varepsilon^{-3/2})$
\citep{cartisEvaluationComplexityCubic2013,curtis2018Complexity},
while higher-order models achieve
$\mathcal{O}(\varepsilon^{-(p+1)/p})$ for $p$-th order methods ($p\ge2$),
as shown
in~\citep{birgin2016Evaluation,martinez2017Highorder,cartis2022evaluation}.

Inexact penalty methods, such as the quadratic penalty and augmented Lagrangian
methods have also received a lot of attention. Under a non-singularity
condition on the constraints, \citet{li2021Rateimproved} showed an $\tilde
\calO(\varepsilon\invv{3})$ bound for an inexact augmented Lagrangian method.

\citet{birgin2020Complexity} give complexity results for the \textsc{Algencan}
algorithm~\citep{andreani2008augmented}, an augmented Lagrangian method
designed for practical performance. They show that the number of outer
iteration is logarithmic in several circumstances, and combine it with an inner
solver of arbitrary complexity.
\citet{grapiglia2023Worstcase} shows that the quadratic penalty method
(Algorithm~\ref{algo_quadratic_penalty}) with non-adaptive subproblem accuracy
$\tau(x) \equiv \varepsilon_1$ and gradient descent with an Armijo linesearch
in the subproblems requires at most $\tcalO(\varepsilon^{-4})$ problem
evaluations to find an $\varepsilon$-KKT point.
For $f$ and $c$ weakly convex,~\citet{lin2020inexact} show an $\tilde
\calO(\varepsilon\invv{3})$ bound for a quadratic penalty method.

For linearly constrained problems,~\citet{kong2019Complexity}
proved an $\mathcal{O}(\varepsilon^{-3})$ bound for a quadratic penalty method
with an accelerated
first-order inner solver. Under a Slater-type condition, proximal-point or
augmented
Lagrangian variants reach $\tilde{\mathcal{O}}(\varepsilon^{-5/2})$,
e.g.,~\citep{li2021augmented,lin2020inexact,melo2020Iterationcomplexity}.
\citet{grapiglia2021complexity} obtained a
$\tilde{\mathcal{O}}(\varepsilon^{-(p+1)/p})$ bound for an augmented Lagrangian
method with a $p$-th order inner solver, matching the order of the
proximal augmented Lagrangian bound of~\citet{xie2021complexity}.
\citet{he2023NewtonCG} later improved this bound, giving a result of $\tilde
\calO(\varepsilon\invv{7/2})$ under an LICQ condition and $\tilde
\calO(\varepsilon\invv{11/2})$ without LICQ.

Outside the realm of penalty methods, Riemannian optimization methods are
designed for instances of~\eqref{eq:P} where the feasible set is a smooth
manifold, with a convenient expression for the tangent space and projection
onto the feasible set.
Under a Lipschitz continuity assumption, Riemannian gradient descend finds a
$(0,\varepsilon_1)$-KKT point in at most $\calO(\varepsilon_1\invv{2})$
iterations~\citep{boumal2019global}. A similar bound holds for a second-order
Riemannian trust-region, but is improved to $\calO(\log \log(\varepsilon\inv))$
for a class of functions with strict saddle points~\citep{goyens2025riemannian}.

Overall, comparing these results is nontrivial, as they address
different problem classes, and usually consider different
 regularity assumptions on $c$ or requirements on the
quality of the initial point. The present work improves the existing
literature by establishing complexity bounds for a quadratic penalty method that allows inexact
subproblem solutions.

\subsection{Contents}
The paper is organized as follows. In Section~\ref{sec_qpm}, we define the
Quadratic Penalty Method (QPM) and introduce the feasibility-aware tolerance. In Section~\ref{sec_outer_noLICQ}, we show that
the outer iteration complexity result from~\citep{grapiglia2023Worstcase} still
applies when the accuracy in the subproblems uses the 
adaptive
tolerance in~\eqref{eq:first_order_condition}. In Section~\ref{sec_outer_LICQ}, we
derive an improved outer iteration complexity bound under a PL condition on the
constraint violation (\aref{assu_LICQ}). In
Section~\ref{sec_total_complexity_FO}, we give total evaluation complexity
bounds for QPM when a first-order method performs the inner minimization.
In Section~\ref{sec_total_complexity_SO}, we give total evaluation complexity
bounds when a second-order method performs the inner minimization. In
Section~\ref{sec_numerics}, we illustrate our findings empirically and show
that the adaptive accuracy in the subproblems improves practical performance.

\subsection*{Notations}
Let $\norm{\cdot}$ to denote the usual $2$-norm in $\Rn$. Let $J(x) \in \Rmn$
denote the Jacobian of the function $c$, and $\mathbb{N}^*$ denote the set of
positive natural numbers. We use $\conv(A)$ to denote the convex hull of a set
$A$. 

\section{The Quadratic Penalty Method}\label{sec_qpm}
We consider the Quadratic Penalty Method described in
Algorithm~\ref{algo_quadratic_penalty}.
A classical result about the quadratic penalty
method~\citep[Prop.~4.2.1]{bertsekas1997Nonlinear} shows that, under mild
assumptions, global minimizers of $Q_{\beta_k}$ converge to the global
minimizer of \eqref{eq:P} as $\beta_k \to \infty$. This is of little practical
value since it is undesirable to have $\beta_k$ grow arbitrarily large, as it
makes the subproblems badly conditioned. Furthermore, it is in general not
possible to find a global minimizer of the quadratic penalty. This raises two
important questions:
 \begin{itemize}
 	\item[(i)] How rapidly should $\beta_k$ grow?
 	\item[(ii)] To what accuracy should each subproblem be solved?
 \end{itemize}
At iteration $k$, we update the penalty parameter as $\beta_{k+1} = \alpha
\beta_k$, where $\alpha>1$ is a parameter of QPM.
We require that the approximate minimizer of $Q_{\beta_k}$ yields a reduction
in the value of $Q_{\beta_k}$ compared to the current point $x_k$ and the
initial point $x_0$~\eqref{eq:zeroth_order_condition}; and we also require that
it be approximately first-order stationary~\eqref{eq:first_order_condition}.
 
In~\citep{grapiglia2023Worstcase}, at each iteration, the subproblem is solved
up to the target accuracy~$\varepsilon_1$. However, it is customary in
optimization softwares to use a tolerance larger than $\varepsilon_1$ for the
stationarity in the early iterations. Various heuristics  are used in practice,
such as starting with $\sqrt{\varepsilon_1}$ and dividing the tolerance by a
factor $10$ at each outer iteration
(\textsc{Algencan},\,\citet{andreani2008augmented}); another strategy is given
in~\citep{eckstein2013Practical}. We propose an adaptive and practical
tolerance on the gradient norm. The accuracy is proportional to the current
level of feasibility, which is a natural way to spare computational resources
when the iterates are far from the feasible set.
 
 \begin{definition}[Feasibility-aware tolerance]\label{def:feas_tol}
Given tolerances $\varepsilon_0>0,\varepsilon_1>0$, a \emph{feasibility-aware
tolerance} is a function $\tau:\mathbb{R}^n\to\mathbb{R}_+$ given, for some
$\tau_{\mathrm{cap}}\ge\varepsilon_1$ by
\begin{equation}\label{eq:tau_def_lemma}
    \tau(x)
    :=
    \max\Bigl\{
        \varepsilon_1,\
\min\Bigl(\tau_{\mathrm{cap}},\,\tfrac{\varepsilon_1}{\varepsilon_0}\,\norm{c(x)}\Bigr)
    \Bigr\}.
\end{equation}
\end{definition}
This ensures
\begin{equation}\label{eq:property_tau}
    \tau(x) \ge \varepsilon_1 \quad \text{for all } x\in\mathbb{R}^n,
    \qquad\text{and}\qquad
    \tau(x) = \varepsilon_1 \;\text{whenever } \|c(x)\|\le \varepsilon_0.
\end{equation}
Consequently, as soon as QPM generates $\xkp$ satisfying
 $\norm{c(x_{k+1})} \leq \varepsilon_0$, it automatically follows that $\norm{\nabla Q_{\beta_k}(x_{k+1})} \leq \varepsilon_1$, which
makes it an~\ref{eq:focp} point. In our numerical experiments
(Section~\ref{sec_numerics}), we compare the choice
  \begin{equation*}
\tau(x) :=
\max\left(\varepsilon_1,\dfrac{\varepsilon_1}{\varepsilon_0}\norm{c(x)}\right),
 \end{equation*}
corresponding to $\taucap = +\infty$; with the non-adaptive choice $\tau(x)
\equiv \varepsilon_1$, given by $\tau_{\mathrm{cap}} = 0$.
 
 \begin{algorithm}
\caption{Quadratic Penalty Method (QPM)}\label{algo_quadratic_penalty}
\begin{algorithmic}[1]
\State \textbf{Given:}  $\varepsilon_0>0$, $\varepsilon_1 >0$, $\alpha>0$,
$\beta_0 \geq 1$, feasibility-aware tolerance $\tau\colon \Rn \to \R_+$, and
$x_0\in \Rn$
\State $k \leftarrow 0$
\While{true}
\State Find $x_{k+1}\in \Rn$ an approximate minimizer of $Q_{\beta_k}$ that
satisfies
\begin{align}\label{eq:zeroth_order_condition}
Q_{\beta_k}(x_{k+1}) \leq \min\left\lbrace Q_{\beta_k}(x_k),
Q_{\beta_k}(x_0)\right\rbrace
\end{align}
and
\begin{align}\label{eq:first_order_condition}
	\norm{\nabla Q_{\beta_k}(x_{k+1})} \leq \tau(\xkp).
\end{align}
\If{$\norm{c(x_{k+1})} \leq \varepsilon_0$}
\State Return $x_{k+1}$ \Comment{$(\varepsilon_0, \varepsilon_1)$-KKT point
with multipliers $-\beta_k c(x_{k+1})$}
\EndIf
\State $\beta_{k+1} = \alpha \beta_k$.
\State $k \leftarrow k +1$
\EndWhile
\end{algorithmic}
\end{algorithm}

\FloatBarrier


 \section{Outer iteration complexity without the PL condition}
 \label{sec_outer_noLICQ}
This section establishes an outer iteration complexity bound for QPM
(Algorithm~\ref{algo_quadratic_penalty}). The main result of this section
already appears in~\citep{grapiglia2023Worstcase} with the choice $\tau(x)
\equiv \varepsilon_1$. We extend the analysis to include an 
adaptive tolerance in~\eqref{eq:first_order_condition}.

This section makes no regularity assumption on the Jacobian of the constraints.
Section \ref{sec_outer_LICQ} shows improved outer iteration complexity
guarantees under the assumption that the constraint violation satisfies the PL
condition.

We state standard assumptions.
 \begin{assumption}\label{assu_smooth_problem}
     The functions $f$ and $c$ are differentiable. 
 \end{assumption}
\begin{assumption}\label{assu_lower_bound_f}
There exists $\flow \in \R$ such that $f(x) \geq f_{low}$ for all $x\in \Rn$.
\end{assumption}
\begin{assumption}\label{assu_x0_eps_feasible}
The initial iterate $x_0\in \Rn$ is such that $\norm{c(x_0)} \leq
\dfrac{\varepsilon_0}{\sqrt{2}}$.
\end{assumption}

The following lemma states (in contrapositive form) that if, at some iteration
$k$, the penalty parameter $\beta_k$ is sufficiently large, then QPM 
terminates. Specifically, if
$\beta_k \geq \calO(\varepsilon_0\invv{2})$, then
$\norm{c(\xkp)}\leq \varepsilon_0$.
\begin{lemma}[\citet{grapiglia2023Worstcase},\,Corollary 3.3]
\label{lemma_loose_upper_bound_beta}
Under~\aref{assu_smooth_problem},~\aref{assu_lower_bound_f},
~\aref{assu_x0_eps_feasible}, if $\norm{c(x_k)}>\varepsilon_0$ for
$k=1,\dots,T-1$, then
 \begin{align}\label{eq_first_beta_upper_bound}
\beta_{T-2} < 4(f(x_0) - \flow) \varepsilon_0^{-2}.
 \end{align}
 \end{lemma}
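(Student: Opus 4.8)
The plan is to use the standard quadratic-penalty telescoping argument. First I would use the zeroth-order decrease condition~\eqref{eq:zeroth_order_condition}: since $x_{k+1}$ satisfies $Q_{\beta_k}(x_{k+1}) \le Q_{\beta_k}(x_0)$, we get $f(x_{k+1}) + \tfrac{\beta_k}{2}\norm{c(x_{k+1})}^2 \le f(x_0) + \tfrac{\beta_k}{2}\norm{c(x_0)}^2$. Rearranging and using Assumption~\aref{assu_lower_bound_f} to bound $f(x_{k+1}) \ge \flow$, this yields
\begin{equation*}
\tfrac{\beta_k}{2}\norm{c(x_{k+1})}^2 \le f(x_0) - \flow + \tfrac{\beta_k}{2}\norm{c(x_0)}^2.
\end{equation*}
By Assumption~\aref{assu_x0_eps_feasible}, $\norm{c(x_0)}^2 \le \varepsilon_0^2/2$, so the last term is at most $\beta_k \varepsilon_0^2/4$.

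Next I would invoke the hypothesis. If the algorithm has not terminated by iteration $T$, then in particular $\norm{c(x_{T-1})} > \varepsilon_0$, which gives $\tfrac{\beta_{T-2}}{2}\norm{c(x_{T-1})}^2 > \tfrac{\beta_{T-2}}{2}\varepsilon_0^2$. Combining with the bound above applied at index $k = T-2$ (so that $x_{k+1} = x_{T-1}$),
\begin{equation*}
\tfrac{\beta_{T-2}}{2}\varepsilon_0^2 < f(x_0) - \flow + \tfrac{\beta_{T-2}}{4}\varepsilon_0^2,
\end{equation*}
and moving the second right-hand term to the left gives $\tfrac{\beta_{T-2}}{4}\varepsilon_0^2 < f(x_0) - \flow$, i.e. $\beta_{T-2} < 4(f(x_0) - \flow)\varepsilon_0^{-2}$, which is exactly~\eqref{eq_first_beta_upper_bound}.

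The only subtle point — and the place where one must be careful rather than where any real difficulty lies — is bookkeeping the indices: the bound is stated for $\beta_{T-2}$ with the infeasibility test failing at $x_{T-1}$, so one applies the decrease inequality at outer iteration $k = T-2$, whose output is $x_{T-1}$. One also needs $T \ge 2$ for $\beta_{T-2}$ to be defined, which is implicit in the statement ($k$ ranges over $1,\dots,T-1$, so $T \ge 2$). Since this result is quoted from~\citet{grapiglia2023Worstcase} and the adaptive tolerance $\tau$ plays no role in the zeroth-order condition, no new argument is needed here; the first-order condition~\eqref{eq:first_order_condition} is not used at all in this particular bound.
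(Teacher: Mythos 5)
Your proof is correct and is exactly the standard telescoping argument behind this result: the paper does not reprove the lemma (it cites \citet{grapiglia2023Worstcase}, Corollary~3.3), but the argument there is precisely the one you give — combine the decrease condition $Q_{\beta_{T-2}}(x_{T-1})\le Q_{\beta_{T-2}}(x_0)$ with $f\ge\flow$, $\norm{c(x_0)}^2\le\varepsilon_0^2/2$, and $\norm{c(x_{T-1})}>\varepsilon_0$, then rearrange. Your index bookkeeping and the observation that only the zeroth-order condition (not the adaptive tolerance) is needed are both correct.
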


Recall that $\mathbb{N}^*$ denotes the set of positive natural numbers. We now
show that the number of outer iterations of QPM is at most a logarithmic
factor of the upper bound on the penalty parameter given in
Lemma~\ref{lemma_loose_upper_bound_beta}.
\begin{theorem}[Outer iterations of QPM without the PL
condition]\label{thm_outer_complexity_no_licq}
Under~\aref{assu_smooth_problem},~\aref{assu_lower_bound_f},
~\aref{assu_x0_eps_feasible}, define
\begin{align}\label{eq_T_eps0}
T(\varepsilon_0):= \inf \{ k\in \mathbb{N}^*\colon \norm{c(x_k)} \leq
\varepsilon_0\}.
\end{align}
Then, 
\begin{align}\label{eq:Tsinglehat}
\Tlimit < \Tsinglehat:= 2 + \log_\alpha\left(
4(f(x_0)-\flow)\beta_0\inv\varepsilon_0^{-2}\right),
\end{align}
and the point $x_{T(\varepsilon_0)}$ generated by
Algorithm~\ref{algo_quadratic_penalty} is an~\ref{eq:focp} point
of~\eqref{eq:P}, satisfying
\begin{align}
\norm{c\!\left(x_{T(\varepsilon_0)}\right)}&\leq \varepsilon_0  &&\textrm{ and } &
\norm{\nabla f \!\left(x\indexTlimit \right) + \beta_{\Tlimit-1} \sum_{i=1}^m
c_i\!\left(x\indexTlimit\right) \nabla c_i \!\left(x\indexTlimit\right) } \leq \varepsilon_1 .
\end{align}
\end{theorem}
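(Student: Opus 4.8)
The plan is to use Lemma~\ref{lemma_loose_upper_bound_beta} as the single analytic ingredient: it caps the penalty parameter as long as every iterate generated so far is infeasible, and together with the geometric update $\beta_{k+1}=\alpha\beta_k$ this gives at once the finiteness of $T(\varepsilon_0)$ and the explicit logarithmic bound. The $(\varepsilon_0,\varepsilon_1)$-KKT conclusion then follows by reading off the stopping test of Algorithm~\ref{algo_quadratic_penalty} and invoking property~\eqref{eq:property_tau} of the feasibility-aware tolerance.

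First I would show $T(\varepsilon_0)<\infty$. If instead $\norm{c(x_k)}>\varepsilon_0$ for every $k\ge1$, then for each integer $T\ge2$ the hypothesis of Lemma~\ref{lemma_loose_upper_bound_beta} holds, so $\beta_0\alpha^{T-2}=\beta_{T-2}<4(f(x_0)-\flow)\varepsilon_0^{-2}$; letting $T\to\infty$ contradicts $\alpha>1$ and $\beta_0\ge1$. Hence $T:=T(\varepsilon_0)$ is finite. When $T\ge2$, the definition of the infimum gives $\norm{c(x_k)}>\varepsilon_0$ for $k=1,\dots,T-1$, so Lemma~\ref{lemma_loose_upper_bound_beta} yields $\beta_0\alpha^{T-2}<4(f(x_0)-\flow)\varepsilon_0^{-2}$; applying $\log_\alpha$ (strictly increasing since $\alpha>1$) gives $T-2<\log_\alpha\!\bigl(4(f(x_0)-\flow)\beta_0\inv\varepsilon_0^{-2}\bigr)$, that is, $T<\Tsinglehat$. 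The base case $T(\varepsilon_0)=1$ I would dispatch directly from the zeroth-order condition~\eqref{eq:zeroth_order_condition} and \aref{assu_x0_eps_feasible}.

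For the KKT claim, note that the algorithm returns $x_{k+1}$ at the outer iteration $k=T(\varepsilon_0)-1$, so $x_{T(\varepsilon_0)}$ was produced by the subproblem with penalty $\beta_{T(\varepsilon_0)-1}$, and the exit test is precisely $\norm{c(x_{T(\varepsilon_0)})}\le\varepsilon_0$. By~\eqref{eq:property_tau}, this feasibility forces $\tau(x_{T(\varepsilon_0)})=\varepsilon_1$, so the stationarity requirement~\eqref{eq:first_order_condition} reads $\norm{\nabla Q_{\beta_{T(\varepsilon_0)-1}}(x_{T(\varepsilon_0)})}\le\varepsilon_1$. Since $\nabla Q_\beta(x)=\nabla f(x)+\beta\sum_{i=1}^m c_i(x)\nabla c_i(x)$, this is exactly the second displayed inequality, which exhibits $x_{T(\varepsilon_0)}$ as an~\ref{eq:focp} point with multipliers $\lambda=-\beta_{T(\varepsilon_0)-1}c(x_{T(\varepsilon_0)})$.

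I do not anticipate a genuine obstacle: the analytic content sits entirely in the imported Lemma~\ref{lemma_loose_upper_bound_beta}, whose proof relies only on~\eqref{eq:zeroth_order_condition} and is therefore unaffected by the adaptive tolerance. The points requiring care are bookkeeping ones --- tracking the correspondence between the outer index $k$, the penalty $\beta_k$, and the iterate $x_{k+1}$ (which is why $\beta_{T-2}$, not $\beta_{T-1}$, is the quantity inverted into the iteration count), handling the small-$T$ base case, and verifying that the feasibility-aware tolerance never weakens the final guarantee, which is exactly the content of~\eqref{eq:property_tau}.
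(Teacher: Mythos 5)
Your proposal is correct and follows essentially the same route as the paper's proof: invoke Lemma~\ref{lemma_loose_upper_bound_beta} together with the geometric update $\beta_{T-2}=\alpha^{T-2}\beta_0$ to bound $T(\varepsilon_0)$, then combine the exit test with property~\eqref{eq:property_tau} and the identity $\nabla Q_\beta(x)=\nabla f(x)+\beta J(x)\transpose c(x)$ to conclude. Your additional care with the finiteness argument and the $T(\varepsilon_0)=1$ edge case is sound bookkeeping but does not change the substance of the argument.
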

\begin{proof}
By definition of $\Tlimit$, we have $\norm{c(x_k)}>\varepsilon_0$ for
$k=1,\dots,\Tlimit-1$. By Lemma~\ref{lemma_loose_upper_bound_beta}, this gives
    \begin{align*}
\alpha^{\Tlimit-2} \beta_0  = \beta_{\Tlimit-2} < 4(f(x_0)-\flow)\varepsilon_0^{-2}.
    \end{align*}
    Therefore, $\Tlimit$ is finite and
 \begin{align*}
\Tlimit < 2 +
\log_\alpha\left(4(f(x_0)-\flow)\beta_0\inv\varepsilon_0^{-2}\right)=:
\Tsinglehat.
    \end{align*}
Finally, we have $\norm{c\! \left(x_{T(\varepsilon_0)}\right)}\leq \varepsilon_0$ by
construction, which by~\eqref{eq:first_order_condition} ensures
    \begin{align*}
\norm{\nabla Q_{\beta_{\Tlimit-1}} \!\left(x\indexTlimit\right)} &\leq \tau\!\left(x\indexTlimit\right) =
\varepsilon_1.
    \end{align*}
    To conclude, the identity
    \begin{align*}
\norm{\nabla Q_{\beta_{\Tlimit-1}}\!\left(x\indexTlimit\right)} = \norm{\nabla f
\!\left(x\indexTlimit\right) + \beta_{\Tlimit-1} \sum_{i=1}^m 
c_i\!\left(x\indexTlimit\right) \nabla c_i
\!\left(x\indexTlimit\right) },
    \end{align*}
implies that $x\indexTlimit$ is an~\ref{eq:focp} point with Lagrange
multipliers $-\beta_{\Tlimit-1} c \!\left(x\indexTlimit \right)$.
\end{proof}

In the next section, we make an additional assumption---that the constraint
violation satisfies the PL condition near the feasible set. This allows to
improve the complexity result.

     \section{Outer iteration complexity under the PL condition}
     \label{sec_outer_LICQ}
In this section, we show a new complexity result for QPM, which improves
the bound of Theorem~\ref{thm_outer_complexity_no_licq}. The analysis of the
previous section merely uses the decrease in the value of the penalty at each
subproblem~\eqref{eq:zeroth_order_condition}. To obtain a sharper bound, we
leverage the approximate stationarity
condition~\eqref{eq:first_order_condition}.
The most natural way to do so is to work under an additional regularity
condition on the Jacobian of the constraints.
     
      \begin{assumption}[PL condition for the constraint violation]\label{assu_LICQ}
There exists positive constants $R>\varepsilon_0$ and $\sigmamin$ such that for
all $x$ in the set
    \begin{align}
        \LICQregion := \{x\in \Rn\colon \norm{c(x)}\leq R\},
    \end{align}
     we have
\begin{align}\label{eq:PL}
         \norm{J(x)\transpose c(x)} \geq \sigmamin \norm{c(x)},
     \end{align}
     where $J(x) \in \R^{m\times n}$ denotes the Jacobian of $c$.
 \end{assumption}
Note that~\eqref{eq:PL} is the Polyak–Łojasiewicz (PL) condition for the
constraint violation $$\phi(x)=\tfrac12\|c(x)\|^2.$$
Indeed,~\eqref{eq:PL} is equivalent to
\begin{equation}
\phi(x) \leq \dfrac{1}{2 \sigmamin^2}\norm{\nabla \phi(x)}^2\quad \text{ for all }
x\in \LICQregion.
\end{equation}
It is weaker than the usual LICQ condition at a point $x$ which demands that
$J(x)$ be full rank, but we require a positive lower bound $\sigmamin>0$ in a
tubular neighbourhood of the feasible set. 
The regularity condition~\eqref{eq:PL} is relatively common in the literature, and appears under various names. Most analyses assume that the condition holds on $\Rn$, which
excludes many common applications. Notably, we derive a global bound
from a local condition.

 We also require that a sublevel set
of $f$ be bounded.
      \begin{assumption}\label{assu_bounded_sublevelsets}
         The set $\mathcal{L}_f\left(2f(x_0)-\flow\right)$ is bounded.
     \end{assumption}
 First, we provide several examples which satisfy our assumptions. In any of the examples below, taking $f$ coercive (i.e., $\underset{\norm{x}\to \infty}{\lim}
f(x) = +\infty$) is a sufficient condition to satisfy~\aref{assu_bounded_sublevelsets}.
 \begin{example}[Affine constraints]
 The problem
    \begin{equation}
\begin{aligned}
& \underset{x\in \Rn}{\min}
& & f(x)\\
& \text{subject to}
& & Ax=b,
\end{aligned}
\end{equation}
with $\rank(A)=m$ satisfies~\aref{assu_LICQ}.
 \end{example}
\begin{example}[Orthogonality constraints]
\label{example:stiefel}
The problem
        \begin{equation}
\begin{aligned}
& \underset{X\in \R^{n\times p}}{\min}
& & f(X)\\
& \text{subject to}
& & X\T X = \I_p,
\end{aligned}
\end{equation}
satisfies~\aref{assu_LICQ} for any
$\varepsilon_0 <R<1$ and $\sigmamin \leq 2\sqrt{1-R}$.
\end{example}
\begin{proof}
See~\citep[p.4]{goyens2024computing}.
\end{proof}

\begin{example}[Binary constraints]\label{example_binary}
The problem
\begin{equation}
\begin{aligned}
& \underset{x\in\Rn}{\min}
& & f(x)\\
& \text{subject to}
& & x_i(1-x_i) = 0, \quad \text{ for } i = 1,\dots,n,
\end{aligned}
\end{equation}
which encodes the binary
constraints $x_i\in\{0,1\}$, satisfies~\aref{assu_LICQ} for
 any $R$ such that $\varepsilon_0 < R <
\tfrac{1}{4}$ and \( \sigmamin \;=\; \sqrt{\,1-4R\,}\;>\;0\).
\end{example}
\begin{proof}
The Jacobian of $c$ is diagonal,
\(
J(x) = \mathrm{diag}\big(1-2x_1,\dots,1-2x_n\big).
\)
Using the identity
\[
(1-2x_i)^2 = 1 - 4c_i(x),
\]
for any $x$ such that $\|c(x)\|\le R$, we obtain
\begin{equation*}
\begin{aligned}
\|J(x)^\top c(x)\|^2
&= \sum_{i=1}^n (1-2x_i)^2\,c_i(x)^2\\
&= \sum_{i=1}^n (1-4c_i)\,c_i(x)^2\\
&\ge (1-4R)\sum_{i=1}^n c_i(x)^2\\
&= (1-4R)\|c(x)\|^2.
\end{aligned}
\end{equation*}
Thus,
\[
\|J(x)^\top c(x)\| \ge \sqrt{\,1-4R\,}\|c(x)\|\qquad\text{for all
}x\in\LICQregion.\qedhere
\]
\end{proof}

Furthermore, combining several nonsingular constraints gives a constraint
function that satisfies~\aref{assu_LICQ}.
\begin{proposition}[\citet{goyens2024computing}, \emph{Prop.1.1}]
\label{prop:product_manifold}
For $i=1,2,\dots,k$, consider $k$ functions $c_i\colon \Rn \to \R^{m_i}$ that
satisfy~\aref{assu_LICQ} with constants $R_i$ and $\sigmabar_i$.
Then, the function $c\colon \Rn \to \Rm$ with  $m=m_1 + \cdots +m_k$ defined by
$c(x) = \left(c_1(x), \dots, c_k(x)\right)\transpose$
satisfies~\aref{assu_LICQ} with constants $R = \min(R_1, \dots, R_k)$ and
$\sigmabar = \min(\sigmabar_1, \dots,\sigmabar_k)$.
\end{proposition}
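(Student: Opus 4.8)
The plan is to prove Proposition~\ref{prop:product_manifold} by a direct block-decomposition argument. First I would observe that the claimed radius $R = \min(R_1,\dots,R_k)$ is the natural one: if $\norm{c(x)} \le R$, then for each $i$ we have $\norm{c_i(x)} \le \norm{c(x)} \le R \le R_i$, so $x \in \mathcal{C}_{R_i}$ for every block, and hence the PL inequality~\eqref{eq:PL} is available for each $c_i$ at $x$. This is the step that makes the whole argument go through, and it is essentially immediate from the submultiplicativity of the norm under stacking.

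Next I would write out the Jacobian of the stacked map. Since $c(x) = (c_1(x),\dots,c_k(x))^\top$, the Jacobian $J(x) \in \R^{m \times n}$ is the vertical concatenation of the blocks $J_i(x) \in \R^{m_i \times n}$, so that $J(x)^\top c(x) = \sum_{i=1}^k J_i(x)^\top c_i(x)$. At this point there is a choice to make: the cross terms in $\norm{\sum_i J_i(x)^\top c_i(x)}^2$ need not vanish, so I would instead work coordinate-blockwise. Consider the augmented vector in $\R^{nk}$ obtained by stacking the $k$ vectors $J_i(x)^\top c_i(x)$; its squared norm is $\sum_i \norm{J_i(x)^\top c_i(x)}^2 \ge \sum_i \sigmabar_i^2 \norm{c_i(x)}^2 \ge \sigmabar^2 \sum_i \norm{c_i(x)}^2 = \sigmabar^2 \norm{c(x)}^2$, using the per-block PL bounds and $\sigmabar = \min_i \sigmabar_i$. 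But this controls the stacked vector, not $J(x)^\top c(x)$ itself, so I need to relate the two.

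The cleanest way to close the gap is to avoid the ambient sum entirely and use an orthogonal decomposition of $\R^n$, or better, to reinterpret the PL condition as a statement about $\phi$. Recall $\phi(x) = \tfrac12\norm{c(x)}^2 = \sum_{i=1}^k \phi_i(x)$ with $\phi_i(x) = \tfrac12\norm{c_i(x)}^2$, and $\nabla\phi(x) = \sum_i \nabla\phi_i(x)$ where $\nabla\phi_i(x) = J_i(x)^\top c_i(x)$. Here again the gradients need not be orthogonal. The honest resolution: the statement as phrased is only true if we are slightly careful, because in general $\norm{\sum_i v_i} \not\ge c\sqrt{\sum_i \norm{v_i}^2}$. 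I would therefore check whether the intended reading is that the $c_i$ depend on \emph{disjoint blocks of variables} (as in Examples~\ref{example:stiefel} and~\ref{example_binary}, and the affine case with block-diagonal $A$), in which case the $J_i$ have disjoint column supports, the vectors $J_i(x)^\top c_i(x)$ live in orthogonal coordinate subspaces of $\R^n$, and $\norm{J(x)^\top c(x)}^2 = \sum_i \norm{J_i(x)^\top c_i(x)}^2$ exactly. Under that reading the chain of inequalities above immediately yields $\norm{J(x)^\top c(x)} \ge \sigmabar\norm{c(x)}$ for all $x \in \mathcal{C}_R$, which is the claim.

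The main obstacle, then, is not any computation but pinning down the precise hypothesis: whether ``combining constraints'' means variable-disjoint products (the manifold-product setting suggested by the name \texttt{prop:product\_manifold}) or arbitrary stacking. I expect the former, in which the orthogonality of supports does all the work; I would state that assumption explicitly at the start of the proof, then the argument is the three short inequalities above plus the radius bound from the first paragraph. If instead arbitrary stacking is meant, one cannot get $\sigmabar = \min_i\sigmabar_i$ in general, so I would flag that the product structure is essential and refer to~\citep{goyens2024computing} for the precise formulation.
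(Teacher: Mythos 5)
Your analysis is essentially correct, and the difficulty you flag is genuine rather than a gap in your own argument. The radius part is exactly right: if $\norm{c(x)}\le R=\min_i R_i$ then $\norm{c_i(x)}\le\norm{c(x)}\le R_i$ for every $i$, so each block's PL inequality is available at $x$. Under the product reading --- each $c_i$ depending on its own disjoint block of variables, so that the vectors $J_i(x)^\top c_i(x)$ have pairwise disjoint coordinate supports --- the chain $\norm{J(x)^\top c(x)}^2=\sum_i\norm{J_i(x)^\top c_i(x)}^2\ge\sum_i\sigmabar_i^2\norm{c_i(x)}^2\ge\sigmabar^2\norm{c(x)}^2$ closes the proof. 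The paper itself supplies no proof and simply defers to \citet{goyens2024computing}; the label \texttt{prop:product\_manifold} and the paper's own examples (Stiefel, binary constraints, block combinations of these) all live in that variable-disjoint setting, so the reading you propose is the intended one.

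Your caution about arbitrary stacking is warranted: as transcribed, with all $c_i$ functions of the same $x$ and no orthogonality hypothesis, the statement is false. A concrete instance: take $n=2$, $c_1(x)=x_1$ and $c_2(x)=x_1-x_2^2$. Each satisfies~\aref{assu_LICQ} globally with $\sigmabar_i=1$, since $\norm{J_1(x)^\top c_1(x)}=|c_1(x)|$ and $\norm{J_2(x)^\top c_2(x)}=|c_2(x)|\sqrt{1+4x_2^2}\ge|c_2(x)|$. Yet along the curve $x_1=x_2^2/2$ one computes $J(x)^\top c(x)=(0,\,x_2^3)^\top$ and $\norm{c(x)}=x_2^2/\sqrt{2}$, so the ratio $\norm{J(x)^\top c(x)}/\norm{c(x)}=\sqrt{2}\,|x_2|$ tends to $0$ as these points approach the (nonempty) joint feasible set $\{0\}$; no uniform $\sigmamin>0$ exists on any tube $\{\norm{c(x)}\le R\}$. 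So the cross terms are not a technicality you failed to handle --- they are precisely why the disjointness (or some other orthogonality of the $J_i^\top c_i$) must appear as a hypothesis. With that hypothesis stated explicitly, your three inequalities plus the radius observation constitute a complete and correct proof, and it is the same block-orthogonality argument as in the cited reference.
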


We now turn to the complexity analysis under~\aref{assu_LICQ}, and begin with a
lemma stating that the iterates of QPM are contained in a sublevel set of
the function $f$.
 \begin{lemma}\label{lemma_sublevelset}
Under~\aref{assu_smooth_problem},~\aref{assu_lower_bound_f},
~\aref{assu_x0_eps_feasible}, 
if $\norm{c(x_k)}>\varepsilon_0$ for $k=1,\dots,T-1$, then
     \begin{align}
         x_k \in \sublevelset:= \left\lbrace
x\in \Rn: f(x) \leq 2 f(x_0) - \flow
         \right\rbrace,
     \end{align}
for $k=0,\dots,T-1$.
     \end{lemma}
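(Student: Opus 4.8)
The plan is to handle the base index $k=0$ directly and then to treat every $k\in\{1,\dots,T-1\}$ uniformly, combining the ``$x_0$ part'' of the zeroth-order decrease condition~\eqref{eq:zeroth_order_condition} with the upper bound on the penalty parameter from Lemma~\ref{lemma_loose_upper_bound_beta}.

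For $k=0$ I would simply observe that $f(x_0)\le 2f(x_0)-\flow$ is equivalent to $\flow\le f(x_0)$, which is exactly~\aref{assu_lower_bound_f}; hence $x_0\in\sublevelset$.

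For $k\in\{1,\dots,T-1\}$, I would use~\eqref{eq:zeroth_order_condition} at the $(k-1)$st inner solve to get $Q_{\beta_{k-1}}(x_k)\le Q_{\beta_{k-1}}(x_0)$. Since $\beta_{k-1}>0$, the left-hand side dominates $f(x_k)$, and expanding the right-hand side and invoking~\aref{assu_x0_eps_feasible} (so that $\norm{c(x_0)}^2\le\varepsilon_0^2/2$) gives
\[
f(x_k)\ \le\ f(x_0)+\tfrac{\beta_{k-1}}{2}\norm{c(x_0)}^2\ \le\ f(x_0)+\tfrac{\beta_{k-1}\varepsilon_0^2}{4}.
\]
To finish I would bound $\beta_{k-1}$: since $k\le T-1$, the hypothesis $\norm{c(x_j)}>\varepsilon_0$ holds for $j=1,\dots,k$, so Lemma~\ref{lemma_loose_upper_bound_beta} applied with $T$ replaced by $k+1$ (equivalently, using that $\{\beta_j\}$ is increasing so that $\beta_{k-1}\le\beta_{T-2}$) yields $\beta_{k-1}<4(f(x_0)-\flow)\varepsilon_0^{-2}$, hence $\tfrac{\beta_{k-1}\varepsilon_0^2}{4}<f(x_0)-\flow$. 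Substituting gives $f(x_k)<2f(x_0)-\flow$, i.e.\ $x_k\in\sublevelset$.

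The argument is essentially bookkeeping; the only point I expect to need care is the index shift when invoking Lemma~\ref{lemma_loose_upper_bound_beta}---checking that the standing hypothesis ``$\norm{c(x_k)}>\varepsilon_0$ for $k=1,\dots,T-1$'' is precisely what certifies the required bound on $\beta_{k-1}$ for each relevant $k$---together with the degenerate cases $T=1$ (where the claim collapses to the base case) and $T=2$ (where only $\beta_0$ appears). Note that no regularity of the Jacobian $J$ is needed here, consistent with this lemma preceding the use of~\aref{assu_LICQ}.
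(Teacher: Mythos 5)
Your proof is correct and follows essentially the same route as the paper: handle $k=0$ via \aref{assu_lower_bound_f}, then for $k\ge 1$ combine the decrease condition $Q_{\beta_{k-1}}(x_k)\le Q_{\beta_{k-1}}(x_0)$ with \aref{assu_x0_eps_feasible} and the bound $\beta_{k-1}<4(f(x_0)-\flow)\varepsilon_0^{-2}$ from Lemma~\ref{lemma_loose_upper_bound_beta}. Your explicit attention to the index shift when invoking that lemma is a welcome bit of extra care that the paper's proof leaves implicit.
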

     \begin{proof}
Clearly $x_0 \in \mathcal{L}_f \!\left(2f(x_0)-\flow\right)$. Let $k\in
\{1,\dots,T-1\}$. The decrease condition $Q_{\beta_{k-1}}(x_k) \leq
Q_{\beta_{k-1}} (x_0)$~\eqref{eq:zeroth_order_condition}, combined with
Lemma~\ref{lemma_loose_upper_bound_beta} and $\norm{c(x_0)} \leq
\frac{\varepsilon_0}{\sqrt{2}}$ gives
\begin{equation*}
         \begin{aligned}
             f(x_k) &\leq f(x_k) + \frac{\beta_{k-1}}{2} \norm{c(x_k)}^2\\ 
             &\leq f(x_0) + \frac{\beta_{k-1}}{2} \norm{c(x_0)}^2\\
&\leq f(x_0) + 2 (f(x_0) - \flow)\varepsilon_0^{-2}\frac{\varepsilon_0^{2}}2\\
             &= 2f(x_0) - \flow.
         \end{aligned}
         \end{equation*}
     \end{proof}

The next lemma, shows that if $\beta_k \geq \calO\!\left(\varepsilon_0\inv\right)$, then
QPM terminates, i.e., $\norm{c(\xkp)}\leq\varepsilon_0$.

\begin{lemma}\label{lemma_tight_upper_bound_beta}
Under~\aref{assu_smooth_problem},~\aref{assu_lower_bound_f},
~\aref{assu_x0_eps_feasible},~\aref{assu_LICQ},
~\aref{assu_bounded_sublevelsets}, if
$\norm{c(x_k)}>\varepsilon_0$ for $k=1,\dots,T-1$, then
    \begin{align}\label{eq_betamax}
\beta_{T-2} < \max \left\{ \frac{\Lipf + \varepsilon_1}{\sigmamin},
\dfrac{4(f(x_0)-\flow)}{R}\right\}\varepsilon_0\inv,
    \end{align}
    where $\Lipf \geq 0$ is such that 
    \begin{align}
    	\norm{\nabla f(x)} \leq \Lipf \quad \text{ for all } x\in \sublevelset.
    \end{align}
\end{lemma}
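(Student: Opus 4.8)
The plan is to extract information from the two conditions defining the approximate subproblem solution $x_{T-1}$ (the approximate minimizer of $Q_{\beta_{T-2}}$)---the decrease condition~\eqref{eq:zeroth_order_condition} and the stationarity condition~\eqref{eq:first_order_condition}---and to split the analysis according to whether $x_{T-1}$ lies in the tubular neighbourhood $\LICQregion$ on which the PL inequality~\eqref{eq:PL} is available. Two preliminary observations set the stage. First, Lemma~\ref{lemma_sublevelset}, which covers indices $k=0,\dots,T-1$, gives $x_{T-1}\in\sublevelset$, hence $\norm{\nabla f(x_{T-1})}\le\Lipf$. Second, since $\norm{c(x_{T-1})}>\varepsilon_0$ by hypothesis, the feasibility-aware tolerance obeys $\tau(x_{T-1})\le\max\bigl\{\varepsilon_1,\tfrac{\varepsilon_1}{\varepsilon_0}\norm{c(x_{T-1})}\bigr\}=\tfrac{\varepsilon_1}{\varepsilon_0}\norm{c(x_{T-1})}$, using $\min(\taucap,\cdot)\le\cdot$ in~\eqref{eq:tau_def_lemma} together with $\norm{c(x_{T-1})}/\varepsilon_0>1$.

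\emph{Case 1: $\norm{c(x_{T-1})}\le R$.} Then $x_{T-1}\in\LICQregion$, and combining the stationarity condition~\eqref{eq:first_order_condition} with the identity $\nabla Q_{\beta_{T-2}}(x)=\nabla f(x)+\beta_{T-2}J(x)\transpose c(x)$, the reverse triangle inequality, and the PL bound~\eqref{eq:PL} yields $\beta_{T-2}\,\sigmamin\,\norm{c(x_{T-1})}\le\norm{\nabla f(x_{T-1})}+\tau(x_{T-1})\le\Lipf+\tfrac{\varepsilon_1}{\varepsilon_0}\norm{c(x_{T-1})}$. Dividing through by $\norm{c(x_{T-1})}>\varepsilon_0$ then gives $\beta_{T-2}<\tfrac{\Lipf+\varepsilon_1}{\sigmamin}\,\varepsilon_0^{-1}$.

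\emph{Case 2: $\norm{c(x_{T-1})}>R$.} Here~\eqref{eq:PL} is no longer guaranteed, so I would fall back on the decrease condition~\eqref{eq:zeroth_order_condition}, namely $Q_{\beta_{T-2}}(x_{T-1})\le Q_{\beta_{T-2}}(x_0)$, which after rearranging and using $f(x_{T-1})\ge\flow$ reads $\tfrac{\beta_{T-2}}{2}\bigl(\norm{c(x_{T-1})}^2-\norm{c(x_0)}^2\bigr)\le f(x_0)-\flow$. Since~\aref{assu_x0_eps_feasible} gives $\norm{c(x_0)}^2\le\varepsilon_0^2/2<R^2/2$ (using $\varepsilon_0<R$) and $\norm{c(x_{T-1})}^2>R^2$, the bracket exceeds $R^2/2$, so $\beta_{T-2}<4(f(x_0)-\flow)R^{-2}\le 4(f(x_0)-\flow)R^{-1}\varepsilon_0^{-1}$, the last inequality again using $\varepsilon_0<R$. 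Taking the maximum of the two case bounds gives~\eqref{eq_betamax}.

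The essential point---and the only real subtlety---is that the case split is forced by the fact that~\eqref{eq:PL} is assumed only on $\LICQregion$ and not globally; the rest is bookkeeping. One must be careful that $\norm{\nabla f(\cdot)}$ and $\tau(\cdot)$ are controlled precisely at the index $T-1$, which is why Lemma~\ref{lemma_sublevelset} is invoked up to that index and the hypothesis $\norm{c(x_{T-1})}>\varepsilon_0$ is used to collapse the maximum in~\eqref{eq:tau_def_lemma} to its middle branch. A last minor reconciliation is that Case~2 naturally produces the slightly stronger bound $R^{-2}$, which is relaxed to the stated $R^{-1}\varepsilon_0^{-1}$ using $R>\varepsilon_0$.
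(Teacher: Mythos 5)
Your proof is correct, and it reaches the bound by a genuinely different route from the paper. The paper argues by contradiction: it supposes $\beta_k$ exceeds the stated threshold, uses the decrease condition~\eqref{eq:zeroth_order_condition} together with the largeness of $\beta_k$ to certify $x_{k+1}\in\LICQregion$, and then derives $\norm{c(x_{k+1})}\le\varepsilon_0$ from the PL branch alone, contradicting $\norm{c(x_{k+1})}>\varepsilon_0$. You instead give a direct argument with a case split on whether $x_{T-1}$ lies in $\LICQregion$: inside, the chain (reverse triangle inequality, PL, $\tau\le\tfrac{\varepsilon_1}{\varepsilon_0}\norm{c}$, $\norm{\nabla f}\le\Lipf$ via Lemma~\ref{lemma_sublevelset}) is the same as the paper's and yields the first term of the max; outside, the decrease condition~\eqref{eq:zeroth_order_condition} is used not as a membership certificate but as a direct bound on $\beta_{T-2}$ (essentially the computation behind Lemma~\ref{lemma_loose_upper_bound_beta} with $R$ in place of $\varepsilon_0$), yielding the second term. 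Both proofs consume exactly the same two hypotheses, but your arrangement makes the provenance of each term in the max explicit, which is a pedagogical gain; the paper's arrangement avoids the case split at the cost of a contradiction frame. One cosmetic remark: in your Case 1, dividing by $\norm{c(x_{T-1})}>\varepsilon_0$ gives a strict inequality only when $\Lipf>0$; in the degenerate corner $\Lipf=0$ you obtain $\beta_{T-2}\le\varepsilon_1/(\sigmamin\varepsilon_0)$ rather than a strict bound. This is immaterial (one may always enlarge $\Lipf$), but worth a word if you want the stated strict inequality verbatim.
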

\begin{proof}
Note that $\Lipf\geq 0$ is well defined since $\sublevelset$ is
bounded~(\aref{assu_bounded_sublevelsets}). Suppose by contradiction that there
exists $k \in \{0,\dots,T-2\}$ such that $\norm{c(\xkp)}>\varepsilon_0$ and
	\begin{align}\label{eq_contradiction_beta}
\beta_k \geq \max \left\{ \frac{\Lipf + \varepsilon_1}{\sigmamin},
\dfrac{4(f(x_0)-\flow)}{R}\right\}\varepsilon_0\inv.
	\end{align}
	        The condition $Q_{\beta_k}(x_{k+1}) \leq Q_{\beta_k}(x_0)$ yields
\begin{align}
f(x_{k+1}) + \dfrac{\beta_k}{2}\norm{c(x\kplus)}^2 \leq f(x_0) +
\frac{\beta_k}{2}\norm{c(x_0)}^2.
\end{align}
Since $\varepsilon_0 <R$, condition~\eqref{eq_contradiction_beta} implies that
$\beta_k \geq 4(f(x_0) - \flow)/R^2$. Using $\norm{c(x_0)}^2 \leq
\frac{\varepsilon_0^2}{2}$, it follows
\begin{align*}
\norm{c(x\kplus)}^2 &\leq \frac{2(f(x_0) - \flow)}{\beta_k} + \norm{c(x_0)}^2 \\
&\leq \frac{R^2}{2} + \frac{R^2}{2} \leq R^2.
\end{align*}
Thus, $x_{k+1}$ belongs to the regular region $\LICQregion$, which ensures 
\begin{align*}
    \norm{J(x_{k+1})\T c(x_{k+1})} \geq \sigmamin \norm{c(\xkp)}.
\end{align*}
Notice that $\nabla Q_\beta(x) = \nabla f(x) + \beta J(x)\transpose c(x)$.
Therefore, the first-order condition~\eqref{eq:first_order_condition} gives
\begin{equation}
\begin{aligned}
    \tau(\xkp) & \geq \norm{\nabla Q_{\beta_k}(\xkp)}\\
    &= \norm{ \nabla f(\xkp) + \beta_k J(\xkp)\transpose c(\xkp)}\\
     &\geq \beta_k \norm{J(\xkp)\transpose c(\xkp)} - \norm{\nabla f(\xkp)}\\
         &\geq \beta_k \sigmamin \norm{c(\xkp)} - \norm{\nabla f(\xkp)},
        \end{aligned}
\end{equation}
which gives
\begin{align}\label{eq_bound_infeasibility_x_plus}
  \sigmamin \beta_k \norm{c(\xkp)} \leq \tau(\xkp) + \norm{\nabla f(\xkp)}.
\end{align}
By Lemma~\ref{lemma_sublevelset}, $x_{k+1} \in \sublevelset$, and therefore
$\norm{\nabla f(x_{k+1})} \leq \Lipf$. Since $\norm{c(\xkp)} >\varepsilon_0$, we have
$$\tau(\xkp) = \max\left \{ \varepsilon_1, \min\left( \taucap,
\frac{\varepsilon_1}{\varepsilon_0}\norm{c(\xkp)}\right)\right\} \leq
\max\left \{ \varepsilon_1,
\frac{\varepsilon_1}{\varepsilon_0}\norm{c(\xkp)}\right\}  =
\frac{\varepsilon_1}{\varepsilon_0}\norm{c(\xkp)}.$$
This gives 
\begin{align*}
\sigmamin \beta_k\norm{c(\xkp)} &\leq
\frac{\varepsilon_1}{\varepsilon_0}\norm{c(\xkp)} + \Lipf,
\end{align*}
that is, 
\begin{align*}
\norm{c(\xkp)} &\leq \dfrac{\Lipf}{\sigmamin \beta_k -
\frac{\varepsilon_1}{\varepsilon_0}}.
\end{align*}
Since $\beta_k \geq \frac{\Lipf + \varepsilon_1}{\sigmamin \varepsilon_0}$, we
have $\norm{c(\xkp)} \leq \varepsilon_0$, which is a contradiction.
\end{proof}

We now state an improved outer iteration bound for QPM under the PL condition on
the constraint violation.
\begin{theorem}[Outer iterations of QPM under the PL
condition]\label{thm:Tdoublehat}
Under~\aref{assu_smooth_problem},~\aref{assu_lower_bound_f},
~\aref{assu_x0_eps_feasible},~\aref{assu_LICQ},
~\aref{assu_bounded_sublevelsets}, let
\begin{align}
T(\varepsilon_0):= \inf \{ k\in \mathbb{N}^*\colon \norm{c(x_k)} \leq
\varepsilon_0\}.
\end{align}
Then, 
\begin{align}\label{eq_doublehat}
T(\varepsilon_0) \leq \Tdoublehat:= 2 + \log_\alpha \left( \max\left\{
\frac{\Lipf+\varepsilon_1}{\sigmamin}, \frac{4
(f(x_0)-\flow)}{R}\right\}\beta_0\inv \varepsilon_0\inv \right),
\end{align}
and the point $x_{ T(\varepsilon_0)}$ generated by
Algorithm~\ref{algo_quadratic_penalty} is an~\ref{eq:focp} point
of~\eqref{eq:P}, satisfying
\begin{align}
\norm{c \!\left(x_{T(\varepsilon_0)}\right)}&\leq \varepsilon_0  &&\textrm{ and } &
\norm{\nabla f \!\left(x\indexTlimit\right) + \beta_{\Tlimit-1} \sum_{i=1}^m
c_i\!\left(x\indexTlimit\right) \nabla c_i \!\left(x\indexTlimit\right) } \leq \varepsilon_1 .
\end{align}
\end{theorem}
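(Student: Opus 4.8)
The plan is to mirror the structure of the proof of Theorem~\ref{thm_outer_complexity_no_licq}, but substitute the sharper penalty bound from Lemma~\ref{lemma_tight_upper_bound_beta} in place of the weaker one from Lemma~\ref{lemma_loose_upper_bound_beta}. First I would argue that $T(\varepsilon_0)$ is finite: suppose for contradiction that $\norm{c(x_k)}>\varepsilon_0$ for $k=1,\dots,T-1$ with $T$ arbitrarily large. Then Lemma~\ref{lemma_tight_upper_bound_beta} applies and yields $\beta_{T-2} < M\varepsilon_0\inv$, where $M := \max\{(\Lipf+\varepsilon_1)/\sigmamin,\ 4(f(x_0)-\flow)/R\}$. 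Since $\beta_{T-2} = \alpha^{T-2}\beta_0$, this forces $\alpha^{T-2} < M\beta_0\inv\varepsilon_0\inv$, i.e., $T - 2 < \log_\alpha(M\beta_0\inv\varepsilon_0\inv)$, which contradicts $T$ being arbitrarily large. Hence the index set in~\eqref{eq_doublehat} is nonempty and $T(\varepsilon_0)$ is well defined and finite.

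Next I would pin down the bound on $T(\varepsilon_0)$ itself. By minimality of $T(\varepsilon_0)$, we have $\norm{c(x_k)}>\varepsilon_0$ for $k=1,\dots,T(\varepsilon_0)-1$, so Lemma~\ref{lemma_tight_upper_bound_beta} applies with $T = T(\varepsilon_0)$, giving
\begin{align*}
\alpha^{T(\varepsilon_0)-2}\beta_0 = \beta_{T(\varepsilon_0)-2} < M\,\varepsilon_0\inv.
\end{align*}
Taking $\log_\alpha$ of both sides and rearranging yields
\begin{align*}
T(\varepsilon_0) < 2 + \log_\alpha\!\left(M\,\beta_0\inv\,\varepsilon_0\inv\right) = \Tdoublehat,
\end{align*}
which is~\eqref{eq_doublehat} (the non-strict inequality in the statement follows a fortiori). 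One subtlety to handle carefully: Lemma~\ref{lemma_tight_upper_bound_beta} is stated under the hypothesis $\norm{c(x_k)}>\varepsilon_0$ for $k=1,\dots,T-1$ and concerns $\beta_{T-2}$, so one should check the edge cases $T(\varepsilon_0)=1$ and $T(\varepsilon_0)=2$ separately (when $T(\varepsilon_0)\le 2$ the claimed bound holds trivially since $M\beta_0\inv\varepsilon_0\inv$ can be taken $\ge 1$ by enlarging $M$ if needed, or one simply notes $T(\varepsilon_0)\ge 1$ always and the bound is vacuous/immediate).

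Finally I would verify that $x_{T(\varepsilon_0)}$ is indeed an~\ref{eq:focp} point, exactly as in Theorem~\ref{thm_outer_complexity_no_licq}: by construction $\norm{c(x_{T(\varepsilon_0)})}\le\varepsilon_0$, and then property~\eqref{eq:property_tau} of the feasibility-aware tolerance gives $\tau(x_{T(\varepsilon_0)}) = \varepsilon_1$, so the stationarity condition~\eqref{eq:first_order_condition} yields $\norm{\nabla Q_{\beta_{T(\varepsilon_0)-1}}(x_{T(\varepsilon_0)})}\le\varepsilon_1$. Using $\nabla Q_\beta(x) = \nabla f(x) + \beta J(x)\transpose c(x) = \nabla f(x) + \beta\sum_{i=1}^m c_i(x)\nabla c_i(x)$, this is precisely the second displayed inequality in the conclusion, with Lagrange multipliers $-\beta_{T(\varepsilon_0)-1}c(x_{T(\varepsilon_0)})$. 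I do not anticipate a genuine obstacle here; the only care needed is the bookkeeping of the shift between $T-2$ in the lemma and $T(\varepsilon_0)$ in the theorem, and the harmless small-$T$ edge cases.
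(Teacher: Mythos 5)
Your proposal is correct and follows essentially the same route as the paper's proof: apply Lemma~\ref{lemma_tight_upper_bound_beta} (in place of Lemma~\ref{lemma_loose_upper_bound_beta}) to bound $\beta_{T(\varepsilon_0)-2}=\alpha^{T(\varepsilon_0)-2}\beta_0$ by $\betamax$, take $\log_\alpha$, and then verify the KKT conditions exactly as in Theorem~\ref{thm_outer_complexity_no_licq} via property~\eqref{eq:property_tau} of the feasibility-aware tolerance. Your extra attention to the small-$T$ edge cases is harmless bookkeeping that the paper leaves implicit.
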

\begin{proof}
By definition of $\Tlimit$, we have $\norm{c(x_k)}>\varepsilon_0$ for
$k=1,\dots,\Tlimit-1$. Lemma~\ref{lemma_tight_upper_bound_beta} gives
    \begin{align*}
\alpha^{\Tlimit-2} \beta_0  = \beta_{\Tlimit-2} < \betamax := \max \left\{
\frac{\Lipf+\varepsilon_1}{\sigmamin},
\dfrac{4(f(x_0)-\flow)}{R}\right\}\varepsilon_0\inv.
    \end{align*}
    Therefore, $\Tlimit$ is finite and in particular
    \begin{align}
        \alpha^{\Tlimit-2} \beta_0  = \beta_{\Tlimit-2} <\betamax.
    \end{align}
This gives 
    \begin{align}
\Tlimit \leq 2 + \log_\alpha\left(
\dfrac{\betamax}{\beta_0}\right)=:\Tdoublehat.
    \end{align}
Finally we have $\norm{c(x_{T(\varepsilon_0)})}\leq \varepsilon_0$ by
construction, which ensures that $x\indexTlimit$ is an~\ref{eq:focp} point with
multipliers $-\beta_{\Tlimit-1} c(x\indexTlimit)$ (see the proof of
Theorem~\ref{thm_outer_complexity_no_licq}).
\end{proof}

\begin{remark}
As the proof of Lemma~\ref{lemma_tight_upper_bound_beta} suggests, we can
replace the requirement $\norm{c(x_0)}\leq \varepsilon_0/\sqrt{2}$ by $x_0\in
\LICQregion$ and maintain convergence, provided that the norm of $\nabla f$
remains bounded over the iterates.
\end{remark}


\section{Total evaluation complexity with first-order inner
solver}\label{sec_total_complexity_FO}
In this section and the next, we derive total evaluation complexity bounds for QPM
(Algorithm~\ref{algo_quadratic_penalty}), which give an upper bound on the
number of oracle calls (evaluations of $f$, $c$ and their derivatives).
The inner evaluation complexity is measured in evaluations of
$Q_{\beta_k}$, $\nabla Q_{\beta_k}$, and $\nabla^2 Q_{\beta_k}$,
 with each such evaluation corresponding to a first- or second-order oracle call.

This section presents results where the inner solver of QPM is a
first-order method. We detail the results with and without the PL condition on
the constraint violation~(\aref{assu_LICQ}).

\subsection{Lemmas on first-order inner solvers}
We suppose that a first-order method---called $\calM_1$---minimizes
$Q_{\beta_k}$ in each subproblem of QPM. The following assumption on the
(inner) evaluation complexity is typical of first-order methods for
unconstrained nonconvex optimization.
\begin{assumption}\label{assu_A1}
Given a continuously differentiable function $F : \mathbb{R}^n \to \mathbb{R}$
with
\[
\mathcal{L}_F(\tilde{x}) = \{ z \in \mathbb{R}^n \mid F(z) \leq F(\tilde{x}) \}
\]  
bounded for some $\tilde{x} \in \mathbb{R}^n$. The method $\mathcal{A}_1$
starting from $\tilde{x}$ needs at most
\[
C_{\mathcal{A}_1} L_1 \big(F(\tilde{x}) - F_{\text{low}}\big)\,\varepsilon^{-2}
\]  
evaluations of $F$ and $\nabla F$ to generate a point satisfying $\norm{\nabla
F(x)}\leq \varepsilon$, where $F_{\text{low}}$ is a lower bound of $F$, the
positive constant $C_{\mathcal{A}_1}$ depends only on the method
$\mathcal{A}_1$, and $\nabla F$ is $L_1$-Lipschitz continuous on
$\conv(\mathcal{L}_F(F(\tilde{x})))$.
\end{assumption}
The traditional Gradient Descent Method with Armijo line search
satisfies~\aref{assu_A1}~\citep[Appendix 1]{grapiglia2023Worstcase}.

We make the following smoothness assumption.
\begin{assumption}\label{assu_twice_differentiable}
    The functions $f$ and $c$ are twice continuously differentiable.
\end{assumption}
Recall that $\phi(x) = \frac12 \norm{c(x)}^2$.
\begin{lemma}\label{lemma_lipschitz_continuity}
Under~\aref{assu_bounded_sublevelsets} and~\aref{assu_twice_differentiable},
there exists constants $\Lipgradf$ and $\LipgradP>0$ such that
\begin{align}\label{eq_lipschitz_constants_f_c}
\norm{\hess f(x)}&\leq \Lipgradf &&\textrm{ and } & \norm{\hess \phi(x)}\leq
\LipgradP
\end{align}
for all $x\in \conv \left( \calL_f(2f(x_0) - \flow)\right)$.

In addition, for any $\beta>0$, we have that $\nabla Q_\beta$ is $(\Lipgradf +
\beta \LipgradP)$-Lipschitz continuous on $\conv(\calL_f(2f(x_0) - \flow))$.
\end{lemma}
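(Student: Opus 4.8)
The plan is to establish the two boundedness claims in \eqref{eq_lipschitz_constants_f_c} first, and then deduce the Lipschitz constant for $\nabla Q_\beta$ by the triangle inequality. For the first part, I would argue that $\mathcal{L}_f(2f(x_0)-\flow)$ is bounded by \aref{assu_bounded_sublevelsets}, hence so is its convex hull $K := \conv(\calL_f(2f(x_0)-\flow))$ (the convex hull of a bounded set is bounded), and moreover $K$ is compact since the sublevel set is closed (by continuity of $f$) and the convex hull of a compact set in $\Rn$ is compact. Under \aref{assu_twice_differentiable}, the maps $x \mapsto \norm{\hess f(x)}$ and $x \mapsto \norm{\hess \phi(x)}$ are continuous on $\Rn$: indeed $\phi = \tfrac12\norm{c}^2$ is twice continuously differentiable because $c$ is, with $\hess\phi(x) = J(x)\T J(x) + \sum_{i=1}^m c_i(x)\,\hess c_i(x)$. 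A continuous function on the compact set $K$ attains a maximum, so we may take $\Lipgradf := \max_{x\in K}\norm{\hess f(x)}$ and $\LipgradP := \max_{x\in K}\norm{\hess \phi(x)}$, both finite and nonnegative; they are positive unless $f$ or $\phi$ is affine on $K$, and in any case we may replace them by strictly positive upper bounds.

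For the second part, fix $\beta > 0$ and note $\nabla Q_\beta(x) = \nabla f(x) + \beta\,\nabla\phi(x)$, so $\hess Q_\beta(x) = \hess f(x) + \beta\,\hess\phi(x)$, whence $\norm{\hess Q_\beta(x)} \le \Lipgradf + \beta\LipgradP$ for all $x\in K$. Since $K$ is convex, the mean value inequality along the segment joining any two points $x,y\in K$ gives
\begin{align*}
\norm{\nabla Q_\beta(x) - \nabla Q_\beta(y)} \le \left(\sup_{z\in K}\norm{\hess Q_\beta(z)}\right)\norm{x-y} \le (\Lipgradf + \beta\LipgradP)\norm{x-y},
\end{align*}
which is exactly the claimed Lipschitz continuity of $\nabla Q_\beta$ on $K = \conv(\calL_f(2f(x_0)-\flow))$.

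There is no real obstacle here; the statement is essentially a compactness-plus-continuity argument. The only point requiring a little care is making explicit that $\conv(\calL_f(2f(x_0)-\flow))$ is compact (not merely bounded), so that the continuous Hessian norms attain their maxima and the mean value inequality applies on a convex domain; I would state this as a short preliminary observation citing Carathéodory's theorem or simply the standard fact that the convex hull of a compact subset of $\Rn$ is compact. Everything else — the formula for $\hess\phi$, the triangle inequality on Hessians, the mean value inequality on a convex set — is routine.
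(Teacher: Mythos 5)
Your proof is correct and follows essentially the same route as the paper's: boundedness of the sublevel set gives boundedness of its convex hull, continuity of the Hessians gives the bounds $\Lipgradf$ and $\LipgradP$, and the triangle inequality plus the mean value inequality on the convex set yields the Lipschitz constant $\Lipgradf+\beta\LipgradP$ for $\nabla Q_\beta$. Your explicit observation that the convex hull is in fact compact (not merely bounded), so that the continuous Hessian norms attain finite maxima, is a small but welcome refinement of the paper's more terse argument.
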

\begin{proof}
By~\aref{assu_bounded_sublevelsets}, the sublevel set $\calL_f(2f(x_0) -
\flow)$ is bounded, so its  convex hull is bounded.
By~\aref{assu_twice_differentiable}, the Hessians $\hess f$ and $\hess P$ are
continuous, hence there exists constants $\Lipgradf$ and $\LipgradP>0$ such
that~\eqref{eq_lipschitz_constants_f_c} holds. Thus, for all $x$ in
$\conv(\calL_f (2f(x_0) - \flow))$ and any $\beta>0$,
    \begin{align*}
\norm{\hess Q_\beta(x)} &\leq \norm{\hess f(x)} + \beta \norm{\hess \phi(x)}\\
        &\leq \Lipgradf + \beta \LipgradP.
    \end{align*}
Therefore, $\nabla Q_\beta$ is $(\Lipgradf+\beta \LipgradP)$-Lipschitz
continuous on $\conv\left(\sublevelset\right)$.
\end{proof}

The next lemma establishes the inner evaluation complexity of QPM with a
first-order solver.
\begin{lemma}[Inner evaluation complexity of $\calM_1$]
\label{inner_complexity_A1}
Under~\aref{assu_lower_bound_f},~\aref{assu_bounded_sublevelsets}
and~\aref{assu_twice_differentiable},
consider iteration $k$ of Algorithm~\ref{algo_quadratic_penalty}, 
where a monotone first-order method $\calM_1$ is
used to minimize $Q_{\beta_k}$ starting from
\begin{equation}\label{eq_x_tilde}
\tilde x_{k,0} := \underset{x\in \{x_k,x_0\}}{\argmin} Q_{\beta_k}(x).	
\end{equation}
If $\calM_1$ satisfies~\aref{assu_A1}, the method $\calM_1$ generates $\xkp$
satisfying the subproblem conditions~\eqref{eq:zeroth_order_condition}
and~\eqref{eq:first_order_condition}
in at most
\begin{align}
2C_{\calM_1}(\Lipgradf + \beta_k \LipgradP)(f(x_0)- \flow) \varepsilon_1^{-2}
\end{align}
first-order oracle calls, where $\Lipgradf$
and $\LipgradP$ are defined in Lemma~\ref{lemma_lipschitz_continuity}.
\end{lemma}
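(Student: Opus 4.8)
The plan is to apply Assumption~\aref{assu_A1} to the function $F := Q_{\beta_k}$, started from $\tilde x := \tilde x_{k,0}$, with lower bound $F_{\mathrm{low}} := \flow$ and target accuracy $\varepsilon := \varepsilon_1$, and then to simplify the resulting estimate. First I would dispatch the easy hypotheses: $Q_{\beta_k}$ is continuously differentiable by~\aref{assu_twice_differentiable}, and $Q_{\beta_k}(x) = f(x) + \tfrac{\beta_k}{2}\norm{c(x)}^2 \ge f(x) \ge \flow$ by~\aref{assu_lower_bound_f}, so $\flow$ is a valid lower bound. The two substantive ingredients to supply are the boundedness of $\mathcal{L}_{Q_{\beta_k}}(\tilde x_{k,0})$ and the Lipschitz constant of $\nabla Q_{\beta_k}$ on $\conv\bigl(\mathcal{L}_{Q_{\beta_k}}(Q_{\beta_k}(\tilde x_{k,0}))\bigr)$, and both will follow from a single sublevel-set inclusion.

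That inclusion is $\mathcal{L}_{Q_{\beta_k}}(\tilde x_{k,0}) \subseteq \sublevelset$. To prove it, I would take $z$ with $Q_{\beta_k}(z) \le Q_{\beta_k}(\tilde x_{k,0})$ and use that $\tilde x_{k,0}$ minimizes $Q_{\beta_k}$ over $\{x_k,x_0\}$, so that
\[
f(z) \;\le\; Q_{\beta_k}(z) \;\le\; Q_{\beta_k}(x_0) \;=\; f(x_0) + \tfrac{\beta_k}{2}\norm{c(x_0)}^2 .
\]
Then $\norm{c(x_0)} \le \varepsilon_0/\sqrt{2}$ (\aref{assu_x0_eps_feasible}) together with the loose penalty bound $\beta_k < 4(f(x_0)-\flow)\varepsilon_0^{-2}$ from Lemma~\ref{lemma_loose_upper_bound_beta} (which applies since iteration $k$ is reached, hence $\norm{c(x_j)} > \varepsilon_0$ for the earlier iterates) give $\tfrac{\beta_k}{2}\norm{c(x_0)}^2 \le f(x_0)-\flow$, whence $f(z) \le 2f(x_0)-\flow$. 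Consequently $\mathcal{L}_{Q_{\beta_k}}(\tilde x_{k,0})$ lies inside the bounded set $\sublevelset$ (\aref{assu_bounded_sublevelsets}), and Lemma~\ref{lemma_lipschitz_continuity} then tells me $\nabla Q_{\beta_k}$ is $(\Lipgradf + \beta_k\LipgradP)$-Lipschitz on $\conv(\sublevelset)$, hence on the smaller hull. Specializing the same chain to $z = \tilde x_{k,0}$ also gives $Q_{\beta_k}(\tilde x_{k,0}) - \flow \le 2(f(x_0)-\flow)$, which is precisely the factor appearing in the statement.

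With these in hand, Assumption~\aref{assu_A1} yields that $\calM_1$, started at $\tilde x_{k,0}$, reaches a point $\xkp$ with $\norm{\nabla Q_{\beta_k}(\xkp)} \le \varepsilon_1$ in at most $C_{\calM_1}(\Lipgradf + \beta_k\LipgradP)(Q_{\beta_k}(\tilde x_{k,0}) - \flow)\varepsilon_1^{-2} \le 2C_{\calM_1}(\Lipgradf + \beta_k\LipgradP)(f(x_0)-\flow)\varepsilon_1^{-2}$ first-order oracle calls. Since $\tau(\xkp) \ge \varepsilon_1$ by~\eqref{eq:property_tau}, such a point satisfies~\eqref{eq:first_order_condition} (and because $\tau(\cdot)\ge\varepsilon_1$ everywhere, the inner solver may stop even earlier, which only decreases the count). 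Finally, monotonicity of $\calM_1$ gives $Q_{\beta_k}(\xkp) \le Q_{\beta_k}(\tilde x_{k,0}) = \min\{Q_{\beta_k}(x_k),Q_{\beta_k}(x_0)\}$, which is~\eqref{eq:zeroth_order_condition}, so both subproblem conditions hold and the proof is complete.

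The main obstacle is the sublevel-set inclusion together with the simultaneous tracking of constants: the inner effort must be measured over a bounded set on which $\nabla Q_{\beta_k}$ has Lipschitz modulus \emph{exactly} $\Lipgradf + \beta_k\LipgradP$ (so that the count scales linearly in $\beta_k$), while at the same time $Q_{\beta_k}(\tilde x_{k,0}) - \flow$ must collapse to $2(f(x_0)-\flow)$. Both rely on starting $\calM_1$ from $\tilde x_{k,0} = \argmin\{Q_{\beta_k}(x_k),Q_{\beta_k}(x_0)\}$ rather than from $x_k$, so that the inequality $Q_{\beta_k}(\tilde x_{k,0}) \le Q_{\beta_k}(x_0)$ can be paired with the near-feasibility of $x_0$ and the loose penalty bound; everything else is routine bookkeeping.
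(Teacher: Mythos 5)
Your proposal is correct and follows essentially the same route as the paper's proof: the sublevel-set inclusion $\mathcal{L}_{Q_{\beta_k}}(\tilde x_{k,0})\subset\sublevelset$ via $Q_{\beta_k}(\tilde x_{k,0})\le Q_{\beta_k}(x_0)\le 2f(x_0)-\flow$, the Lipschitz constant from Lemma~\ref{lemma_lipschitz_continuity}, and the application of~\aref{assu_A1} with the factor $2$ coming from $Q_{\beta_k}(\tilde x_{k,0})-\flow\le 2(f(x_0)-\flow)$. The only difference is that you make explicit the reliance on~\aref{assu_x0_eps_feasible} and Lemma~\ref{lemma_loose_upper_bound_beta} (which the paper also uses, though neither appears in the lemma's stated hypothesis list), and you spell out the $\tau(\cdot)\ge\varepsilon_1$ and monotonicity arguments slightly more fully.
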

\begin{proof}
By~\aref{assu_lower_bound_f}, $Q_{\beta_k}$ is lower
bounded:
    \begin{align}
Q_{\beta_k}(x) = f(x) + \frac{\beta_k}{2}\norm{c(x)}^2_2 \geq f(x) \geq
\flow,\quad \text{ for all }x\in \Rn.
    \end{align}

Let $x\in \calL_{Q_{\beta_k}}(\tilde x_{k,0})$, since
$\norm{c(x_k)}>\varepsilon_0$, Lemma~\ref{lemma_loose_upper_bound_beta} gives
     \begin{align}\label{eq_Q_bound_f}
f(x) \leq Q_{\beta_k}(x) \leq Q_{\beta_k}(\tilde x_{k,0}) \leq Q_{\beta_k}(x_0)
&= f(x_0) + \frac{\beta_k}{2}\norm{c(x_0)}^2 \leq 2 f(x_0) - \flow,
     \end{align}
     which shows
$$\calL_{Q_{\beta_k}}(\tilde x_{k,0}) \subset \sublevelset.$$
Therefore, by Lemma~\ref{lemma_lipschitz_continuity}, $\nabla Q_{\beta_k}$ is
$(\Lipgradf + \beta_k \LipgradP)$-Lipschitz continuous on $\calL_{Q_{\beta_k}}
\left(\tilde x_{k,0}\right)$.
By~\aref{assu_A1}, since $\varepsilon_1 \leq \tau(x)$ for all $x\in \Rn$,
$\calM_1$ generates $\xkp$ satisfying $\norm{\nabla
Q_{\beta_k}(\xkp)}\leq \tau(\xkp)$ and $Q_{\beta_k}(\xkp)
\leq Q_{\beta_k}(\tilde x_{k,0})$ in at most
\begin{align}
C_{\calM_1}(\Lipgradf + \beta_k \LipgradP)(Q_{\beta_k}(\tilde x_{k,0})- \flow)
\varepsilon_1^{-2}
\end{align}
first-order oracle calls.
The conclusion follows from~\eqref{eq_Q_bound_f}. 
\end{proof}

\subsection{First-order inner solver without the PL condition}
Based on the inner evaluation complexity of Lemma~\ref{inner_complexity_A1}, we
have the following total evaluation complexity for
Algorithm~\ref{algo_quadratic_penalty}, without the PL condition on the
constraint violation.

\begin{theorem}[Total evaluation complexity with first-order inner solver
without PL]
\label{thm_A1_NO_LICQ}
Under~\aref{assu_lower_bound_f},
~\aref{assu_x0_eps_feasible},
~\aref{assu_bounded_sublevelsets},~\aref{assu_twice_differentiable}, suppose
that at each iteration of QPM (Algorithm~\ref{algo_quadratic_penalty}), the
point $x_{k+1}$ is computed using a monotone first-order method $\calM_1$
initialized at $$\tilde x_{k,0} = \argmin_{x\in \{x_k,x_0\}} Q_{\beta_k}(x).$$
If the method $\calM_1$ satisfies~\aref{assu_A1}, then QPM generates
an~\ref{eq:focp} point in at most
\begin{align}
8\alpha\Tsinglehat  C_{\calM_1}  \Deltaf^2 (\Lipgradf + \LipgradP)
\varepsilon_0^{-2}\varepsilon_1^{-2}
\end{align}
first-order oracle calls, where $\Lipgradf$
and $\LipgradP$ are defined in Lemma~\ref{lemma_lipschitz_continuity} and $\Tsinglehat$
is defined in~\eqref{eq:Tsinglehat}.
\end{theorem}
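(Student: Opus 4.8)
The statement is a routine "multiply inner by outer" bound: Theorem~\ref{thm_outer_complexity_no_licq} controls the number of outer iterations ($< \Tsinglehat$), and Lemma~\ref{inner_complexity_A1} controls the per-subproblem first-order oracle count. The plan is to sum the inner cost over all outer iterations $k=0,\dots,T(\varepsilon_0)-1$, and bound each term uniformly by using the largest penalty parameter encountered.

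\textbf{Step 1 (Per-iteration bound).} By Lemma~\ref{inner_complexity_A1}, at outer iteration $k$ the method $\calM_1$ uses at most $2C_{\calM_1}(\Lipgradf + \beta_k \LipgradP)\Deltaf\,\varepsilon_1^{-2}$ first-order oracle calls. Since $k \le T(\varepsilon_0)-1$ and $\beta_{k+1} = \alpha\beta_k$, the largest penalty parameter actually used is $\beta_{T(\varepsilon_0)-1}$. From Lemma~\ref{lemma_loose_upper_bound_beta} we have $\beta_{T(\varepsilon_0)-2} < 4\Deltaf\,\varepsilon_0^{-2}$, hence
\[
\beta_k \le \beta_{T(\varepsilon_0)-1} = \alpha\,\beta_{T(\varepsilon_0)-2} < 4\alpha\Deltaf\,\varepsilon_0^{-2}
\]
for every $k \le T(\varepsilon_0)-1$. (If $T(\varepsilon_0) \le 2$ one checks directly that $\beta_k \le \alpha\beta_0$ and the bound is even easier; this edge case can be folded in or handled by $\beta_0 \ge 1$ together with the definition of $\Tsinglehat$.)

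\textbf{Step 2 (Uniform per-iteration cost).} Substituting the bound on $\beta_k$ into the per-iteration cost gives, for each $k$,
\[
2C_{\calM_1}\bigl(\Lipgradf + \beta_k\LipgradP\bigr)\Deltaf\,\varepsilon_1^{-2}
\;\le\; 2C_{\calM_1}\bigl(\Lipgradf + 4\alpha\Deltaf\,\varepsilon_0^{-2}\LipgradP\bigr)\Deltaf\,\varepsilon_1^{-2}.
\]
Using $\varepsilon_0 \le \varepsilon_0$ (so $\varepsilon_0^{-2} \ge$ a constant) and absorbing constants, one bounds $\Lipgradf + 4\alpha\Deltaf\,\varepsilon_0^{-2}\LipgradP \le 4\alpha\Deltaf\,\varepsilon_0^{-2}(\Lipgradf + \LipgradP)$ — here one uses that $\varepsilon_0$ can be assumed $\le 1$ (or more carefully, that $4\alpha\Deltaf\,\varepsilon_0^{-2} \ge 1$, which holds since $\Deltaf \ge 0$ and one may assume $\varepsilon_0$ small; if not, the constant in the $\calO$ simply absorbs it). This yields a uniform per-iteration bound of $8\alpha C_{\calM_1}\Deltaf^2(\Lipgradf+\LipgradP)\varepsilon_0^{-2}\varepsilon_1^{-2}$.

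\textbf{Step 3 (Sum over outer iterations).} The number of outer iterations is $T(\varepsilon_0) < \Tsinglehat$ by Theorem~\ref{thm_outer_complexity_no_licq}. Multiplying the uniform per-iteration bound by $\Tsinglehat$ gives the claimed total of
\[
8\alpha\,\Tsinglehat\, C_{\calM_1}\,\Deltaf^2(\Lipgradf + \LipgradP)\,\varepsilon_0^{-2}\varepsilon_1^{-2}
\]
first-order oracle calls, completing the proof.

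\textbf{Main obstacle.} There is no deep difficulty; the only care needed is in Step 2, in justifying the passage from $\Lipgradf + 4\alpha\Deltaf\,\varepsilon_0^{-2}\LipgradP$ to a clean multiple of $\Deltaf(\Lipgradf+\LipgradP)$. This requires either a standing assumption that $\varepsilon_0$ (and possibly $\varepsilon_1$) is bounded above by some constant, or simply stating the bound up to absorbing $\Lipgradf$ into the leading term (valid when $4\alpha\Deltaf\varepsilon_0^{-2} \ge 1$). I would handle this by noting the bound holds whenever $\varepsilon_0 \le 1$, which is the regime of interest for complexity statements, and remark that otherwise the stated constant absorbs the discrepancy.
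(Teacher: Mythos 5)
Your proof is correct and follows essentially the same route as the paper: bound the number of outer iterations by $\Tsinglehat$ via Theorem~\ref{thm_outer_complexity_no_licq}, bound each subproblem's cost via Lemma~\ref{inner_complexity_A1}, bound $\beta_k$ via Lemma~\ref{lemma_loose_upper_bound_beta}, and multiply. The only divergence is your Step 2: the paper sidesteps your self-identified ``main obstacle'' by using $\beta_k \ge \beta_0 \ge 1$ (an input requirement of Algorithm~\ref{algo_quadratic_penalty}) to write $\Lipgradf + \beta_k\LipgradP \le \beta_k(\Lipgradf+\LipgradP)$ \emph{before} substituting the upper bound on $\beta_k$, so no assumption that $\varepsilon_0\le 1$ or that $4\alpha\Deltaf\,\varepsilon_0^{-2}\ge 1$ is needed (though the latter does in fact follow from $1\le\beta_0\le\beta_{T(\varepsilon_0)-1}<4\alpha\Deltaf\,\varepsilon_0^{-2}$, which would be the cleaner justification of your step).
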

\begin{proof}
By Theorem~\ref{thm_outer_complexity_no_licq}, the number of outer iterations
$\Tlimit$ is upper bounded by $\Tsinglehat$. Since
$\norm{c(x_k)}>\varepsilon_0$ for $k = 1,\dots,T(\varepsilon_0)-1$,
Lemma~\ref{inner_complexity_A1} gives that the $k$th iteration of
Algorithm~\ref{algo_quadratic_penalty} requires at most
$$2C_{\calM_1}(\Lipgradf + \beta_k \LipgradP)(f(x_0)- \flow)
\varepsilon_1^{-2}$$
first-order oracle calls.
Lemma~\ref{lemma_loose_upper_bound_beta} implies
\(1\leq \beta_{\Tlimit	-1} \leq 4\alpha (f(x_0) - \flow) \varepsilon_0^{-2}\). It follows that the total number of first-order oracle
calls is bounded by
\begin{equation*}
\begin{aligned}
&\sum_{k=0}^{\Tlimit-1} 2C_{\calM_1}(\Lipgradf + \beta_k
\LipgradP)(f(x_0)-\flow)\varepsilon_1^{-2}\\
&\leq  \sum_{k=0}^{\Tlimit-1}   2C_{\calM_1} \beta_k (\Lipgradf +
\LipgradP)(f(x_0)- \flow) \varepsilon_1^{-2}\\
&\leq  \Tsinglehat 8\alpha C_{\calM_1}(\Lipgradf + \LipgradP)(f(x_0)- \flow)^2
\varepsilon_0^{-2}\varepsilon_1^{-2}.
\end{aligned}
\end{equation*}
\end{proof}

From Theorem~\ref{thm_A1_NO_LICQ}, in the absence of the PL condition on the
constraint violation, QPM equipped with a first-order inner solver requires at
most
$\mathcal{O}\!\left(\left|\log \!\left(\beta_{0}^{-1}\varepsilon_{0}^{-2}\right)\!\right|
\!\varepsilon_{0}^{-2
}\varepsilon_{1}^{-2}\right)$ calls to a first-order oracle to find
an~\ref{eq:focp} point. In particular, when
$\beta_{0}=\calO \!\left(\varepsilon_{0}^{-2}\right)$, the complexity bound reduces to
$\calO\!\left(\varepsilon_0^{-2}\varepsilon_1^{-2}\right)$.

\subsection{First-order inner solver under the PL condition}
The total evaluation complexity improves by a factor $\varepsilon_0\inv$ under
the PL condition on the constraint violation (\aref{assu_LICQ}).
\begin{theorem}[Total evaluation complexity with first-order inner solver under
PL]\label{thm_A1_LICQ}
Under \aref{assu_lower_bound_f},
\aref{assu_x0_eps_feasible}, \aref{assu_LICQ},
\aref{assu_bounded_sublevelsets}, \aref{assu_twice_differentiable}, suppose
that at each iteration of QPM
(Algorithm~\ref{algo_quadratic_penalty}), the point $x_{k+1}$ is computed using
a monotone first-order method $\calM_1$ initialized at $$\tilde x_{k,0} =
\argmin_{x\in \{x_k,x_0\}} Q_{\beta_k}(x).$$
If the method $\calM_1$ satisfies~\aref{assu_A1}, then QPM generates
an~\ref{eq:focp} point in at most
\begin{align}
2\alpha \Tdoublehat C_{\calM_1} (\Lipgradf + \LipgradP)(f(x_0)-\flow)\max
\left\{ \frac{\Lipf +\varepsilon_1}{\sigmamin} ,
\dfrac{4(f(x_0)-\flow)}{R}\right\}\varepsilon_0\inv\varepsilon_1^{-2}
\end{align}
first-order oracle calls, where $\Lipgradf$
and $\LipgradP$ are defined in Lemma~\ref{lemma_lipschitz_continuity},
$\sigmamin$ and $R$ are defined in~\aref{assu_LICQ}, $\Tdoublehat$ is defined
in~\eqref{eq_doublehat} and $\Lipf$ is defined in
Lemma~\ref{lemma_tight_upper_bound_beta}.
\end{theorem}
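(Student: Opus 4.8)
The plan is to mirror the proof of Theorem~\ref{thm_A1_NO_LICQ} almost verbatim, substituting the sharpened outer-iteration bound of Theorem~\ref{thm:Tdoublehat} for the coarse one of Theorem~\ref{thm_outer_complexity_no_licq}, and the sharpened penalty ceiling of Lemma~\ref{lemma_tight_upper_bound_beta} for the one of Lemma~\ref{lemma_loose_upper_bound_beta}. All of the genuine work has already been carried out in those two results; what is left is bookkeeping, and it is in the passage from the $\calO(\varepsilon_0\invv{2})$ ceiling to the $\calO(\varepsilon_0\inv)$ ceiling that the claimed improvement by a factor $\varepsilon_0\inv$ enters.

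First I would note that \aref{assu_twice_differentiable} implies \aref{assu_smooth_problem}, so all hypotheses of Theorem~\ref{thm:Tdoublehat} are in force. That theorem gives $\Tlimit \le \Tdoublehat$ and, at the same time, certifies that $x\indexTlimit$ is the desired~\ref{eq:focp} point. Hence it suffices to sum the per-subproblem oracle counts over the at most $\Tdoublehat$ outer iterations $k = 0, \dots, \Tlimit - 1$.

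Second, for each such $k$ I would invoke Lemma~\ref{inner_complexity_A1}: since $\calM_1$ is a monotone first-order method started from $\tilde x_{k,0} = \argmin_{x\in\{x_k,x_0\}} Q_{\beta_k}(x)$ and satisfies~\aref{assu_A1}, subproblem $k$ costs at most $2C_{\calM_1}(\Lipgradf + \beta_k\LipgradP)\Deltaf\,\varepsilon_1^{-2}$ first-order oracle calls. I would then bound $\beta_k$ using Lemma~\ref{lemma_tight_upper_bound_beta}, which gives $\beta_{\Tlimit-2} < \betamax = \max\{(\Lipf+\varepsilon_1)/\sigmamin,\ 4\Deltaf/R\}\varepsilon_0\inv$; since $(\beta_k)$ is increasing and $\beta_0 \ge 1$, this yields $1 \le \beta_k < \alpha\betamax$ for every $k$ in range, whence $\Lipgradf + \beta_k\LipgradP \le \beta_k(\Lipgradf + \LipgradP) \le \alpha\betamax(\Lipgradf + \LipgradP)$. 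Summing over the at most $\Tdoublehat$ iterations and substituting the value of $\betamax$ produces exactly the stated bound.

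The only point that needs a moment's care — exactly as in the proof of Theorem~\ref{thm_A1_NO_LICQ} — is the index bookkeeping for $\beta_k$: Lemma~\ref{lemma_tight_upper_bound_beta} controls $\beta_{\Tlimit-2}$, whereas the summation runs up to $k = \Tlimit - 1$, so one extra factor $\alpha$ is incurred in passing from $\beta_{\Tlimit-2}$ to $\beta_{\Tlimit-1}$, and this is precisely the lone $\alpha$ appearing in the final estimate. I do not expect any genuine obstacle here; the argument is entirely parallel to the no-PL case, and the real difficulty lives upstream in the contradiction argument of Lemma~\ref{lemma_tight_upper_bound_beta}.
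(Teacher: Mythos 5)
Your proposal is correct and follows essentially the same route as the paper's proof: invoke Theorem~\ref{thm:Tdoublehat} for the outer bound $\Tdoublehat$, Lemma~\ref{inner_complexity_A1} for the per-subproblem cost, and Lemma~\ref{lemma_tight_upper_bound_beta} together with $\beta_k \ge \beta_0 \ge 1$ to get $\Lipgradf + \beta_k\LipgradP \le \beta_k(\Lipgradf+\LipgradP) < \alpha\betamax(\Lipgradf+\LipgradP)$. Your remark on the index shift from $\beta_{\Tlimit-2}$ to $\beta_{\Tlimit-1}$ correctly accounts for the single factor of $\alpha$ in the final bound, exactly as in the paper.
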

\begin{proof}
By Theorem~\ref{thm:Tdoublehat}, the number of outer iterations $\Tlimit$ is
upper bounded by $\Tdoublehat$.
Since $\norm{c(x_k)}>\varepsilon_0$ for $k = 1,\dots,T(\varepsilon_0)-1$, by
Lemma~\ref{inner_complexity_A1}, iteration $k$ of
Algorithm~\ref{algo_quadratic_penalty} requires at most
    $$
2C_{\calM_1}(\Lipgradf + \beta_k \LipgradP)(f(x_{0})- \flow)
\varepsilon_1^{-2}$$
first-order oracle calls.
Lemma~\ref{lemma_tight_upper_bound_beta} implies
   \begin{align}
1\leq \beta_{\Tlimit-1} < \alpha\max \left\{ \frac{\Lipf+ \varepsilon_1}{\sigmamin},
\dfrac{4(f(x_0)-\flow)}{R}\right\}\varepsilon_0\inv.
    \end{align}  
     Therefore, the total number of first-order oracle calls is bounded by
\begin{align}
&\sum_{k=0}^{\Tlimit-1} 2C_{\calM_1}(\Lipgradf + \beta_k
\LipgradP)(f(x_0)-\flow)\varepsilon_1^{-2}\\
&\le \sum_{k=0}^{\Tlimit-1} 2C_{\calM_1}(\Lipgradf +
\LipgradP)\beta_k(f(x_0)-\flow)\varepsilon_1^{-2}\\
&\leq \Tlimit 2C_{\calM_1} (\Lipgradf + \LipgradP)(f(x_0)-\flow)\alpha\max
\left\{ \frac{\Lipf+ \varepsilon_1}{\sigmamin},
\dfrac{4(f(x_0)-\flow)}{R}\right\}\varepsilon_0\inv\varepsilon_1^{-2} \\
&\leq 2\alpha \Tdoublehat C_{\calM_1} (\Lipgradf + \LipgradP)(f(x_0)-\flow)\max
\left\{ \frac{\Lipf+\varepsilon_1}{\sigmamin},
\dfrac{4(f(x_0)-\flow)}{R}\right\}\varepsilon_0\inv\varepsilon_1^{-2}.\qedhere
\end{align}
\end{proof}
From Theorem~\ref{thm_A1_LICQ}, under the PL condition on the constraint
violation~(\aref{assu_LICQ}), QPM equipped with a first-order inner solver
requires at most
$\mathcal{O}\left(\left|\log(\beta_{0}^{-1}\varepsilon_{0}^{-1})\right
|\varepsilon_{0}^{-1}\varepsilon_{1}^{-2}\right)$ calls to a first-order
oracle to find an~\ref{eq:focp} point. In particular, when
$\beta_{0}=\calO\!\left(\varepsilon_{0}^{-1}\right)$, the complexity bound
reduces to $\calO\!\left(\varepsilon_0^{-1}\varepsilon_1^{-2}\right)$.

\section{Total evaluation complexity with second-order inner
solver}\label{sec_total_complexity_SO}
We consider the total evaluation complexity of QPM when a second-order
method---called $\calM_2$---minimizes the subproblem. We detail the results with and without the PL
condition on the constraint violation.
 
\subsection{Lemmas on second-order inner solvers}

We consider the following assumptions on the problem and the (inner) evaluation
complexity of the second-order solver.
\begin{assumption}\label{assu_c3}
    The functions $f$ and $c$ are three times continuously differentiable.
\end{assumption}

\begin{assumption}\label{assu_A2}
Given a three times continuously differentiable function $F\colon \Rn \to \R$,
consider that the method $\calM_2$ is applied to minimize $F$ starting from
$\tilde x \in \Rn$, where \[\calL_F(\tilde x) = \{z\in \Rn| F(z) \leq F(\tilde
x)\}\]
is bounded. There exists an algorithmic constant of $\calM_2$, called
$\Delta_{\max}$, such that all trial points and iterates visited by $\calM_2$
belong to
$$\Omega_F = \lbrace x+d \in \Rn | x\in \calL_F(\tilde x) \text{ and } \norm{d}
\leq \Delta_{\max} \rbrace,$$
	and the method $\calM_2$ takes at most 
	$$C_{\calM_2}(F(\tilde x) - \Flow)L^{1/2} \varepsilon^{-3/2} $$
evaluations of $F$, $\nabla F$, and $\hess F$ to produce a point $x\in \Rn$
with $\norm{\nabla F(x)}\leq \varepsilon$, where $\hess F$ is $L$-Lipschitz
continuous on $\Omega_F$.
\end{assumption}
The development of second-order algorithms for unconstrained minimization with
optimal worst-case complexity is a very active field of research. For example,
the trust-region method from~\citep{hamad2025Simple}, once augmented with an
upper bound $\Delta_{\max}$ on the trust-region radius,
satisfies~\aref{assu_A2}.
Under~\aref{assu_bounded_sublevelsets} and~\aref{assu_A2}, we define the
bounded set
\begin{align}\label{eq_omega_star}
\Omegastar := \conv \lbrace x+d \in \Rn : x\in \sublevelset \text{ and }
\norm{d} \leq \Delta_{\max} \rbrace.
\end{align}
Additionally, under~\aref{assu_c3}, there exists finite constants
\begin{align}\label{eq_lipschitz_constants_2}
L_{f,2}\ &:=\ \underset{x\in \Omegastar}{\sup} \|\D^3 f(x)\|   &&\textrm{ and }
& L_{P,2}\  :=\ \underset{x\in \Omegastar}{\sup} \|\D^3 \phi(x)\| \, .
\end{align}
In the following lemma, we show that, for all $\beta\ge0$, the Hessian $\hess
Q_\beta$ is Lipschitz continuous on $\Omegastar$. We emphasize that we do not
assume Lipschitz continuity of some derivative of $f$ and $c$, merely that $f$
has a bounded sublevel set~(\aref{assu_bounded_sublevelsets}).
\begin{lemma}\label{lemma_lipschitz_continuity2}
Under~\aref{assu_lower_bound_f},~\aref{assu_bounded_sublevelsets},~\aref{assu_c3},~\aref{assu_A2}, for any $\beta>0$,
the Hessian $\hess Q_\beta$ is $(\Liphessf + \beta \LiphessP)$-Lipschitz
continuous on the set $\Omegastar$~\eqref{eq_omega_star}.
\end{lemma}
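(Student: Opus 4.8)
The plan is to reduce the claim to a uniform bound on the third derivative of $Q_\beta$ over $\Omegastar$, and then pass from that bound to Lipschitz continuity of $\hess Q_\beta$ by integrating the third derivative along line segments; this is the argument of Lemma~\ref{lemma_lipschitz_continuity}, raised by one order of differentiation.

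First I would record the relevant properties of $\Omegastar$ from~\eqref{eq_omega_star}. Under~\aref{assu_bounded_sublevelsets} the sublevel set $\sublevelset$ is bounded, and it is closed since $f$ is continuous (\aref{assu_c3}); hence the tube $\{x+d : x\in\sublevelset,\ \norm{d}\le\Delta_{\max}\}$ is compact, and so is its convex hull $\Omegastar$ (Carath\'eodory in $\Rn$). In particular $\Omegastar$ is \emph{convex}, which is exactly why the convex hull is taken in~\eqref{eq_omega_star}. Since $f$ and $c$ are $C^3$ by~\aref{assu_c3}, so is $\phi=\tfrac12\norm{c}^2$, and therefore $\Liphessf$ and $\LiphessP$ in~\eqref{eq_lipschitz_constants_2} are finite (suprema of the continuous maps $\norm{\D^3 f(\cdot)}$ and $\norm{\D^3\phi(\cdot)}$ over the compact set $\Omegastar$). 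Because $Q_\beta = f + \beta\phi$ depends affinely on $f$ and $\phi$, we get $\D^3 Q_\beta(x) = \D^3 f(x) + \beta\,\D^3\phi(x)$, and hence
\[
    \sup_{x\in\Omegastar}\norm{\D^3 Q_\beta(x)} \;\le\; \Liphessf + \beta\,\LiphessP .
\]

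Finally I would upgrade this to the Lipschitz estimate for $\hess Q_\beta$. Fix $x,y\in\Omegastar$; by convexity the segment $\{x+t(y-x):t\in[0,1]\}$ lies in $\Omegastar$, so applying the fundamental theorem of calculus to $t\mapsto\hess Q_\beta(x+t(y-x))$ gives
\[
    \hess Q_\beta(y) - \hess Q_\beta(x) = \int_0^1 \D^3 Q_\beta\!\big(x+t(y-x)\big)[\,y-x\,]\,\mathrm{d}t ,
\]
whence $\norm{\hess Q_\beta(y)-\hess Q_\beta(x)} \le (\Liphessf+\beta\LiphessP)\,\norm{y-x}$, which is the assertion. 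There is no real obstacle here; the only point needing care is the topological one — that $\Omegastar$ is convex (so that segments between its points stay inside it, validating the integral identity) and compact (so the third-derivative suprema are finite) — which is precisely what the definition~\eqref{eq_omega_star} is designed to ensure.
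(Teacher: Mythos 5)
Your proof is correct and follows essentially the same route as the paper's: bound $\norm{\D^3 Q_\beta}$ on $\Omegastar$ by $\Liphessf+\beta\LiphessP$ via the triangle inequality and the definitions in~\eqref{eq_lipschitz_constants_2}, then conclude Lipschitz continuity of $\hess Q_\beta$. You merely make explicit the compactness/convexity facts and the integration-along-segments step that the paper leaves implicit.
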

\begin{proof}
The set $\Omegastar$ defined in~\eqref{eq_omega_star} is bounded
by~\aref{assu_bounded_sublevelsets} and the constants $\Liphessf$ and
$\LiphessP$ in~\eqref{eq_lipschitz_constants_2} are well defined and finite. We
find that, for all $x\in \Omegastar$ and $\beta>0$,
\begin{equation*}
    \begin{aligned}
        \norm{\D^3 Q_\beta(x)} &\leq \norm{\D^3 f(x)+ \beta \D^3 \phi(x)}\\ 
        &\leq \norm{\D^3 f(x)} + \beta \norm{\D^3 \phi(x)}\\
        &\leq \Liphessf + \beta \LiphessP.
    \end{aligned}
\end{equation*}
Thus,  $\norm{\D^3 Q_\beta(x)} \leq \Liphessf + \beta \LiphessP$ for all $x\in
\Omegastar$, and therefore $\hess Q_\beta$ is $(\Liphessf+\beta
\LiphessP)$-Lipschitz continuous on $\Omegastar$.
\end{proof}

This allows to derive total evaluation complexity bounds for QPM. We begin
with an inner evaluation complexity for the second-order subproblem solver.

\begin{lemma}[Inner evaluation complexity of $\calM_2$]
\label{lemma_inner_iter_A2}
Under~\aref{assu_lower_bound_f},~\aref{assu_bounded_sublevelsets},~\aref{assu_c3},
consider
iteration $k$ of Algorithm~\ref{algo_quadratic_penalty} where a monotone
second-order method $\calM_2$ minimizes $Q_{\beta_k}$ with starting point
$$\tilde x_{k,0} = \argmin_{x\in \{x_k,x_0\}} Q_{\beta_k}(x).$$
If $\calM_2$ satisfies~\aref{assu_A2}, the method $\calM_2$ generates $\xkp$
satisfying the subproblem conditions~\eqref{eq:zeroth_order_condition}
and~\eqref{eq:first_order_condition} in at most
\begin{align}
2 C_{\calM_2}(f(x_0)- \flow)(\Liphessf + \beta_k \LiphessP)^{\frac{1}{2}}
\varepsilon_1^{-\frac{3}{2}}
   \end{align}
second-order oracle calls, where
$\Liphessf$ and $\LiphessP$ are defined in~\eqref{eq_lipschitz_constants_2}.
\end{lemma}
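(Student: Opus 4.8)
The plan is to follow the template of the proof of Lemma~\ref{inner_complexity_A1}, simply replacing the first-order ingredients (\aref{assu_A1} and Lemma~\ref{lemma_lipschitz_continuity}) by their second-order analogues (\aref{assu_A2} and Lemma~\ref{lemma_lipschitz_continuity2}). First I would record that $Q_{\beta_k}$ is bounded below by $\flow$, since $Q_{\beta_k}(x) = f(x) + \tfrac{\beta_k}{2}\norm{c(x)}^2 \ge f(x) \ge \flow$ by~\aref{assu_lower_bound_f}. Next, exactly as in the chain~\eqref{eq_Q_bound_f}, I would use that $\tilde x_{k,0}$ minimizes $Q_{\beta_k}$ over $\{x_k,x_0\}$, together with $\norm{c(x_0)}\le\varepsilon_0/\sqrt2$ and the bound $\beta_k<4(f(x_0)-\flow)\varepsilon_0^{-2}$ from Lemma~\ref{lemma_loose_upper_bound_beta} (valid because QPM has not terminated before iteration $k$, i.e.\ $\norm{c(x_k)}>\varepsilon_0$), to obtain, for any $x\in\calL_{Q_{\beta_k}}(\tilde x_{k,0})$,
$$f(x)\le Q_{\beta_k}(x)\le Q_{\beta_k}(\tilde x_{k,0})\le Q_{\beta_k}(x_0)\le 2f(x_0)-\flow.$$
This shows $\calL_{Q_{\beta_k}}(\tilde x_{k,0})\subset\sublevelset$ and, in particular, $Q_{\beta_k}(\tilde x_{k,0})-\flow\le 2(f(x_0)-\flow)$, which is where the factor $2$ in the statement originates.

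The step that is genuinely new compared to the first-order case concerns the region on which the Hessian of the penalty must be Lipschitz. By~\aref{assu_A2}, every trial point and iterate produced by $\calM_2$ started from $\tilde x_{k,0}$ lies in $\Omega_{Q_{\beta_k}}(\tilde x_{k,0})=\{x+d : x\in\calL_{Q_{\beta_k}}(\tilde x_{k,0}),\ \norm{d}\le\Delta_{\max}\}$. Combining this with the inclusion $\calL_{Q_{\beta_k}}(\tilde x_{k,0})\subset\sublevelset$ just established, I get $\Omega_{Q_{\beta_k}}(\tilde x_{k,0})\subset\{x+d : x\in\sublevelset,\ \norm{d}\le\Delta_{\max}\}\subset\Omegastar$ with $\Omegastar$ as in~\eqref{eq_omega_star}. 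Hence Lemma~\ref{lemma_lipschitz_continuity2} applies and $\hess Q_{\beta_k}$ is $(\Liphessf+\beta_k\LiphessP)$-Lipschitz continuous on $\Omega_{Q_{\beta_k}}(\tilde x_{k,0})$, which is exactly the hypothesis needed to invoke the complexity bound of~\aref{assu_A2}.

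To finish, I would apply~\aref{assu_A2} with $F=Q_{\beta_k}$, $\tilde x=\tilde x_{k,0}$, $L=\Liphessf+\beta_k\LiphessP$ and $\varepsilon=\varepsilon_1$. Since $\tau(x)\ge\varepsilon_1$ for all $x$ by~\eqref{eq:property_tau}, the stopping rule~\eqref{eq:first_order_condition} is met no later than the first iterate with gradient norm at most $\varepsilon_1$; thus $\calM_2$ produces $\xkp$ with $\norm{\nabla Q_{\beta_k}(\xkp)}\le\tau(\xkp)$ in at most $C_{\calM_2}(Q_{\beta_k}(\tilde x_{k,0})-\flow)(\Liphessf+\beta_k\LiphessP)^{1/2}\varepsilon_1^{-3/2}$ second-order oracle calls. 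Monotonicity of $\calM_2$ gives $Q_{\beta_k}(\xkp)\le Q_{\beta_k}(\tilde x_{k,0})=\min\{Q_{\beta_k}(x_k),Q_{\beta_k}(x_0)\}$, so~\eqref{eq:zeroth_order_condition} holds, and substituting $Q_{\beta_k}(\tilde x_{k,0})-\flow\le 2(f(x_0)-\flow)$ yields the claimed bound. The one delicate point — and the reason \aref{assu_A2} and $\Omegastar$ are phrased with the $\Delta_{\max}$-enlargement — is precisely the localization of \emph{all} trial points, not only accepted iterates, inside a set on which $\hess Q_{\beta_k}$ is Lipschitz; once this is in place, the argument is a routine transcription of the first-order proof, with the exponent $-2$ replaced by $-3/2$ and the Lipschitz constant replaced by its square root.
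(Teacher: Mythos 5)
Your proposal is correct and follows essentially the same route as the paper's proof: establish the lower bound $Q_{\beta_k}\ge\flow$, use the chain~\eqref{eq_Q_bound_f} to place $\calL_{Q_{\beta_k}}(\tilde x_{k,0})$ inside $\sublevelset$ (and hence all iterates and trial points inside $\Omegastar$), invoke Lemma~\ref{lemma_lipschitz_continuity2} for the Hessian Lipschitz constant, and apply~\aref{assu_A2} with $\varepsilon=\varepsilon_1\le\tau(\cdot)$ before substituting $Q_{\beta_k}(\tilde x_{k,0})-\flow\le 2(f(x_0)-\flow)$. Your explicit remark on localizing the trial points via the $\Delta_{\max}$-enlargement is exactly the point the paper's proof relies on.
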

\begin{proof}
By~\aref{assu_lower_bound_f}, we have $Q_{\beta_k}(x) \geq \flow \quad \text{
for all }x\in \Rn$.
We also have that $\calL_{Q_{\beta_k}}(\tilde x_{k,0}) \subset
\sublevelset$~\eqref{eq_Q_bound_f}. Therefore, all iterates and trial points of
$\calM_2$ applied to $Q_{\beta_k}$ starting from $\tilde x_{k,0}$ remain in the
set $\Omegastar$	~\eqref{eq_omega_star}. Furthermore, the Hessian $\hess Q_{\beta_k}$ is $(\Liphessf +
\beta_k \LiphessP)$-Lipschitz continuous on $\Omegastar$ by
Lemma~\ref{lemma_lipschitz_continuity2}.
   
Therefore, since $\varepsilon_1 \leq \tau(x)$ for all $x\in \Rn$,
 $\calM_2$ generates $\xkp$ satisfying $\norm{\nabla
Q_{\beta_k}(\xkp)} \leq \tau(\xkp)$ and $Q_{\beta_k}(\xkp)
\leq Q_{\beta_k}(\tilde x_{k,0})$ in at most
\begin{align}
C_{\calM_2}	(Q_{\beta_k}(\tilde x_{k,0})- \flow) (\Liphessf + \beta_k
\LiphessP)^{\frac{1}{2}}  \varepsilon_1^{-\frac{3}{2}}
	\end{align}
second-order oracle calls. The
conclusion follows from $Q_{\beta_k}(\tilde x_{k,0}) \leq 2 f(x_0) -
\flow$~\eqref{eq_Q_bound_f}.
\end{proof}
Considering this result, we derive total evaluation complexity bounds for
Algorithm~\ref{algo_quadratic_penalty}.

\subsection{Second-order inner solver without the PL condition}
In this section, we give a total evaluation complexity bound for QPM with a second-order
inner solver and without the PL condition on the constraint violation.
\begin{theorem}[Total evaluation complexity with second-order inner solver
without PL]\label{thm_A2_no_LICQ}
Under~\aref{assu_lower_bound_f},
~\aref{assu_x0_eps_feasible},~\aref{assu_bounded_sublevelsets},
~\aref{assu_c3}, suppose that at
each iteration of QPM (Algorithm~\ref{algo_quadratic_penalty}), the point
$\xkp$ is computed using a monotone second-order method $\calM_2$ initialized
at $$\tilde x_{k,0} = \argmin_{x\in \{x_k,x_0\}} Q_{\beta_k}(x).$$
If the method $\calM_2$ satisfies~\aref{assu_A2}, then QPM generates
an~\ref{eq:focp} point in 	at most
	\begin{align}
4\Tsinglehat   C_{\calM_2}\sqrt{\alpha}(f(x_0)- \flow)^{\frac32}(\Liphessf +
\LiphessP)^{\frac12}   \varepsilon_0\inv  \varepsilon_1^{-\frac32}
	\end{align}
second-order oracle calls, where
$\Liphessf$ and $\LiphessP$ are defined in~\eqref{eq_lipschitz_constants_2},
and $\Tsinglehat$ is defined in~\eqref{eq:Tsinglehat}.
\end{theorem}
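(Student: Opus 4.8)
The plan is to mirror the structure of the proof of Theorem~\ref{thm_A1_NO_LICQ}, replacing the first-order inner complexity by the second-order one from Lemma~\ref{lemma_inner_iter_A2}, and the $\calO(\varepsilon_0\invv 2)$ penalty bound of Lemma~\ref{lemma_loose_upper_bound_beta} accordingly. First I would invoke Theorem~\ref{thm_outer_complexity_no_licq} to get that the number of outer iterations $T(\varepsilon_0)$ is at most $\Tsinglehat$, and that on all iterations $k=1,\dots,T(\varepsilon_0)-1$ we have $\norm{c(x_k)}>\varepsilon_0$, so Lemma~\ref{lemma_inner_iter_A2} applies at every outer iteration.

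Next I would bound the per-iteration cost. By Lemma~\ref{lemma_inner_iter_A2}, iteration $k$ costs at most $2C_{\calM_2}(f(x_0)-\flow)(\Liphessf+\beta_k\LiphessP)^{1/2}\varepsilon_1^{-3/2}$ second-order oracle calls. The key step is to control $\beta_k$: since $\beta_k\geq\beta_0\geq1$ we have $\Liphessf+\beta_k\LiphessP\leq\beta_k(\Liphessf+\LiphessP)$, so $(\Liphessf+\beta_k\LiphessP)^{1/2}\leq\beta_k^{1/2}(\Liphessf+\LiphessP)^{1/2}$. By Lemma~\ref{lemma_loose_upper_bound_beta}, $\beta_{T(\varepsilon_0)-2}<4(f(x_0)-\flow)\varepsilon_0^{-2}$, hence for all $k\leq T(\varepsilon_0)-1$, $\beta_k\leq\alpha\beta_{T(\varepsilon_0)-2}<4\alpha(f(x_0)-\flow)\varepsilon_0^{-2}$, which gives $\beta_k^{1/2}<2\sqrt{\alpha}(f(x_0)-\flow)^{1/2}\varepsilon_0\inv$.

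Finally I would sum over the (at most $\Tsinglehat$) outer iterations. Each term is bounded by
\[
2C_{\calM_2}(f(x_0)-\flow)\cdot 2\sqrt{\alpha}(f(x_0)-\flow)^{1/2}\varepsilon_0\inv(\Liphessf+\LiphessP)^{1/2}\varepsilon_1^{-3/2}
= 4\sqrt{\alpha}\,C_{\calM_2}(f(x_0)-\flow)^{3/2}(\Liphessf+\LiphessP)^{1/2}\varepsilon_0\inv\varepsilon_1^{-3/2},
\]
and multiplying by $\Tsinglehat$ yields exactly the stated bound. Since the argument is essentially a substitution into the already-established template, I do not expect a genuine obstacle; the only point requiring care is making sure the $\Omegastar$-based Lipschitz machinery (Lemmas~\ref{lemma_lipschitz_continuity2} and~\ref{lemma_inner_iter_A2}) is legitimately invoked at every outer iteration, which is guaranteed by the inclusion $\calL_{Q_{\beta_k}}(\tilde x_{k,0})\subset\sublevelset$ established in~\eqref{eq_Q_bound_f}, itself relying on $\norm{c(x_k)}>\varepsilon_0$ and Lemma~\ref{lemma_loose_upper_bound_beta}.
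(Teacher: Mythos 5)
Your proposal is correct and follows essentially the same route as the paper's proof: bound the outer iterations by $\Tsinglehat$ via Theorem~\ref{thm_outer_complexity_no_licq}, apply Lemma~\ref{lemma_inner_iter_A2} per iteration, use $\beta_0\ge 1$ to absorb $\Liphessf+\beta_k\LiphessP$ into $\beta_k(\Liphessf+\LiphessP)$, and control $\beta_k$ via Lemma~\ref{lemma_loose_upper_bound_beta} before summing. Your added remark on why the $\Omegastar$-based Lipschitz machinery is legitimately invoked matches the justification already embedded in Lemma~\ref{lemma_inner_iter_A2}.
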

\begin{proof}
By Theorem~\ref{thm_outer_complexity_no_licq}, the number of outer iterations
$\Tlimit$ is upper bounded by $\Tsinglehat$. Since
$\norm{c(x_k)}>\varepsilon_0$ for $k = 1,\dots,T(\varepsilon_0)-1$,
Lemma~\ref{lemma_inner_iter_A2} gives that iteration $k$ of
Algorithm~\ref{algo_quadratic_penalty} requires at most
\begin{align}
2 C_{\calM_2}(f(x_0)- \flow)(\Liphessf + \beta_k \LiphessP)^{\frac{1}{2}}
\varepsilon_1^{-\frac{3}{2}}
   \end{align}
second-order	 oracle calls.
Lemma~\ref{lemma_loose_upper_bound_beta} gives
\(1\leq\beta_{\Tlimit-1} \leq 4\alpha (f(x_0) - \flow) \varepsilon_0^{-2}\). Therefore, the total number of second-order oracle calls
is bounded by
\begin{eqnarray*}
		\begin{aligned}
&\sum^{\Tlimit -1}_{k=0} 2 C_{\calM_2}(f(x_0)- \flow)(\Liphessf + \beta_k
\LiphessP)^{\frac{1}{2}} \varepsilon_1^{-\frac{3}{2}}\\
&\leq \sum^{\Tlimit -1}_{k=0} 2 C_{\calM_2}(f(x_0)- \flow)(\Liphessf +
\LiphessP)^{\frac{1}{2}} \beta_k^{\frac{1}{2}} \varepsilon_1^{-\frac{3}{2}}\\
&\leq 2\Tlimit   C_{\calM_2}(f(x_0)- \flow)(\Liphessf +
\LiphessP)^{\frac{1}{2}} (4\alpha (f(x_0) - \flow)
\varepsilon_0^{-2})^{\frac{1}{2}}  \varepsilon_1^{-\frac{3}{2}}\\
&\leq 4\Tsinglehat   C_{\calM_2}\sqrt{\alpha}(f(x_0)- \flow)^{\frac32}(\Liphessf +
\LiphessP)^{\frac{1}{2}}   \varepsilon_0\inv  \varepsilon_1^{-\frac{3}{2}}.
	\end{aligned}
\end{eqnarray*}
\end{proof}

From Theorem~\ref{thm_A2_no_LICQ}, in the absence of the PL condition on the
constraint violation, QPM equipped with a second-order inner solver requires at
most
$\mathcal{O}\left(|\log(\beta_{0}^{-1}\varepsilon_{0}^{-2})|\varepsilon_{0}^{-1
}\varepsilon_{1}^{-3/2}\right)$ calls to a second-order oracle to find
an~\ref{eq:focp} point. In particular, when $\beta_{0}=\calO \!
\left(\varepsilon_{0}^{-2}\right)$, the complexity bound reduces to
$\calO\!\left(\varepsilon_0^{-1}\varepsilon_1^{-3/2}\right)$.

\subsection{Second-order inner solver under the PL condition}
We now show a total evaluation complexity bound under the PL condition on the
constraint violation with a second-order solver in the subproblems.
\begin{theorem}[Total evaluation complexity with second-order inner solver
under PL]\label{thm_A2_LICQ}
Under~\aref{assu_lower_bound_f},
~\aref{assu_x0_eps_feasible},~\aref{assu_LICQ},
~\aref{assu_bounded_sublevelsets},
~\aref{assu_c3},
suppose that at each iteration of QPM (Algorithm~\ref{algo_quadratic_penalty}),
the point $\xkp$ is computed using a monotone second-order method $\calM_2$
initialized at
$$\tilde x_{k,0} = \argmin_{x\in \{x_k,x_0\}} Q_{\beta_k}(x).$$
If the method $\calM_2$ satisfies~\aref{assu_A2}, then QPM generates
an~\ref{eq:focp} in at most
	\begin{align}
2\Tdoublehat   C_{\calM_2}(f(x_0)- \flow) (\Liphessf + \LiphessP)^{\frac{1}{2}}
\sqrt{\alpha}\max \left\{ \frac{\Lipf+\varepsilon_1}{\sigmamin},
\dfrac{4(f(x_0)-\flow)}{R}\right\}^{\frac{1}{2}}
\varepsilon_0\invv{\frac{1}{2}} \varepsilon_1^{-\frac{3}{2}}
	\end{align}
second-order oracle calls, where $\Liphessf$ and $\LiphessP$
are defined in~\eqref{eq_lipschitz_constants_2}, $\sigmamin$ and $R$ are
defined in~\aref{assu_LICQ}, $\Tdoublehat$ is defined in~\eqref{eq_doublehat},
and $\Lipf$ is defined in
Lemma~\ref{lemma_tight_upper_bound_beta}.
\end{theorem}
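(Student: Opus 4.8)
The plan is to mirror the proof of Theorem~\ref{thm_A2_no_LICQ}, but feed in the sharper outer‑iteration count $\Tdoublehat$ from Theorem~\ref{thm:Tdoublehat} and the sharper penalty bound $\beta_{T-2}<\betamax=\calO(\varepsilon_0^{-1})$ from Lemma~\ref{lemma_tight_upper_bound_beta} in place of their $\varepsilon_0^{-2}$ counterparts. First I would invoke Theorem~\ref{thm:Tdoublehat}: under the stated assumptions, $T(\varepsilon_0)\le\Tdoublehat$ is finite and the point $x_{T(\varepsilon_0)}$ returned by Algorithm~\ref{algo_quadratic_penalty} is an~\ref{eq:focp} point, so it suffices to count the oracle calls spent in the $T(\varepsilon_0)$ inner solves.

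Next, since $\norm{c(x_k)}>\varepsilon_0$ for $k=1,\dots,T(\varepsilon_0)-1$, I apply Lemma~\ref{lemma_inner_iter_A2} to each iteration $k\in\{0,\dots,T(\varepsilon_0)-1\}$: the method $\calM_2$ produces $\xkp$ satisfying~\eqref{eq:zeroth_order_condition}--\eqref{eq:first_order_condition} in at most $2C_{\calM_2}(f(x_0)-\flow)(\Liphessf+\beta_k\LiphessP)^{1/2}\varepsilon_1^{-3/2}$ second-order oracle calls. Using $\beta_k\ge\beta_0\ge1$ I bound $(\Liphessf+\beta_k\LiphessP)^{1/2}\le(\Liphessf+\LiphessP)^{1/2}\beta_k^{1/2}$, which isolates the dependence on the penalty parameter.

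Then I control $\beta_k$: by Lemma~\ref{lemma_tight_upper_bound_beta}, $\beta_{T(\varepsilon_0)-2}<\betamax$, hence $\beta_k\le\beta_{T(\varepsilon_0)-1}=\alpha\beta_{T(\varepsilon_0)-2}<\alpha\betamax$ for every $k\le T(\varepsilon_0)-1$, so $\beta_k^{1/2}<\sqrt{\alpha}\,\betamax^{1/2}=\sqrt{\alpha}\max\{(\Lipf+\varepsilon_1)/\sigmamin,\,4(f(x_0)-\flow)/R\}^{1/2}\varepsilon_0^{-1/2}$. Summing the per-iteration cost over the at most $\Tdoublehat$ outer iterations (bounding each $\beta_k^{1/2}$ by this uniform bound, exactly as in the proof of Theorem~\ref{thm_A2_no_LICQ}) yields the claimed bound
\begin{align*}
2\Tdoublehat\,C_{\calM_2}(f(x_0)-\flow)(\Liphessf+\LiphessP)^{1/2}\sqrt{\alpha}\max\left\{\frac{\Lipf+\varepsilon_1}{\sigmamin},\frac{4(f(x_0)-\flow)}{R}\right\}^{1/2}\varepsilon_0^{-1/2}\varepsilon_1^{-3/2}.
\end{align*}

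I do not expect a serious obstacle here; the argument is a routine combination of the three cited results. The only delicate point is the handling of $\sum_k\beta_k^{1/2}$: because the $\beta_k$ grow geometrically one could sum the series $\beta_0^{1/2}\sum_k\alpha^{k/2}$ to get a bound of order $\beta_{T(\varepsilon_0)-1}^{1/2}$ without the extra $\Tdoublehat$ factor, but to keep the statement uniform with the other theorems in the paper I would instead bound the sum crudely by (number of terms) $\times$ (largest term), which is what produces the $\Tdoublehat$ prefactor above.
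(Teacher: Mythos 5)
Your proposal is correct and follows essentially the same route as the paper's proof: invoke Theorem~\ref{thm:Tdoublehat} for the outer-iteration count $\Tdoublehat$, apply Lemma~\ref{lemma_inner_iter_A2} per iteration, factor out $\beta_k^{1/2}$ using $\beta_k\ge 1$, bound it uniformly via Lemma~\ref{lemma_tight_upper_bound_beta}, and multiply by the number of outer iterations. Your closing remark about the geometric sum is a valid observation, but the paper indeed uses the crude (number of terms)$\times$(largest term) bound, exactly as you chose to do.
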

\begin{proof}
By Theorem~\ref{thm:Tdoublehat}, the number of outer iterations $\Tlimit$ is
upper bounded by $\Tdoublehat$. Lemma~\ref{lemma_tight_upper_bound_beta} gives
   \begin{align}
1\leq   \beta_{\Tlimit-1} < \alpha\max \left\{ \frac{\Lipf+\varepsilon_1}{\sigmamin},
\dfrac{4(f(x_0)-\flow)}{R}\right\}\varepsilon_0\inv.
    \end{align}  
Lemma~\ref{lemma_inner_iter_A2} gives that iteration $k$ of
Algorithm~\ref{algo_quadratic_penalty} requires at most
\begin{align}
2 C_{\calM_2}(f(x_0)- \flow)(\Liphessf + \beta_k \LiphessP)^{\frac{1}{2}}
\varepsilon_1^{-\frac{3}{2}}
   \end{align}
second-order oracle calls. Therefore,
the total number of second-order oracle calls is bounded by
		\begin{align*}
&\sum^{\Tlimit -1}_{k=0} 2 C_{\calM_2}(f(x_0)- \flow)(\Liphessf + \beta_k
\LiphessP)^{\frac12} \varepsilon_1^{-\frac32}\\
&\leq \sum^{\Tlimit -1}_{k=0} 2 C_{\calM_2}(f(x_0)- \flow)(\Liphessf +
\LiphessP)^{\frac{1}{2}}  \beta_k^{\frac{1}{2}}\varepsilon_1^{-\frac{3}{2}}\\
&\leq 2\Tdoublehat   C_{\calM_2}(f(x_0)- \flow) (\Liphessf +
\LiphessP)^{\frac{1}{2}}  \sqrt{\alpha}\max \left\{ \frac{\Lipf +
\varepsilon_1}{\sigmamin}, \dfrac{4(f(x_0)-\flow)}{R}\right\}^{\frac{1}{2}}
\varepsilon_0\invv{\frac{1}{2}} \varepsilon_1^{-\frac{3}{2}}.\qedhere
	\end{align*}
\end{proof}
From Theorem~\ref{thm_A2_LICQ}, under the PL condition on the constraint
violation~(\aref{assu_LICQ}), QPM equipped with a second-order inner solver
requires at most
$\mathcal{O}\left(\left|\log(\beta_{0}^{-1}\varepsilon_{0}^{-1})\right
|\varepsilon_{0}^{-1/2}\varepsilon_{1}^{-3/2}\right)$ calls to a second-order
oracle to find an~\ref{eq:focp} point. In particular, when
$\beta_{0}=\calO\!\left(\varepsilon_{0}^{-1}\right)$, the complexity bound
reduces to $\calO\!\left(\varepsilon_0^{-1/2}\varepsilon_1^{-3/2}\right)$.


\section{Illustrative numerical results}
\label{sec_numerics}
In this section, we illustrate our theoretical findings. We demonstrate
numerically the gain in performance induced by the feasibility-aware tolerance
in the subproblems, and we also compare the performance of first- and
second-order solvers in the subproblems.

Our experiments use a Julia implementation of
Algorithm~\ref{algo_quadratic_penalty}. The code to reproduce the results is
available at \url{www.github.com/flgoyens/QPM}. Our implementation includes
first- and second-order methods for the subproblems. The first-order method,
called QPM-GD, uses a gradient descent method with an Armijo linesearch for the
inner minimizations. The second-order method, called QPM-TR, uses a trust
region method with exact Hessian for the inner minimizations. The trust-region
subproblem is solved via the truncated conjugate gradient
method~\citep{conn2000trust}. In our experiments, we use the values $\alpha =
1.2$ and $\beta_0=1$ unless specified otherwise.

In the experiments below, we compare the performance of two choices for the
subproblem tolerance: the non-adaptive rule
 \begin{align}\label{eq:tolerance_eps1}
	\norm{\nabla Q_{\beta_k}(x_{k+1})} \leq \varepsilon_1,
\end{align}
against the adaptive tolerance
 \begin{align}\label{eq:tau_k}
\norm{\nabla Q_{\beta_k}(x_{k+1})} \leq \tau_k :=
\max\left(\varepsilon_1,\dfrac{\varepsilon_1}{\varepsilon_0}\norm{c(\xkp)
}\right),
\end{align}
where $\tau_k$ is used as a label in our plots to denote the adaptive tolerance at
iteration $k$.

Our test problem is the minimization of the extended Rosenbrock function over
the unit sphere: for $n$ even,
\begin{equation}\label{eq:rosenbrock}
\begin{aligned}
& \underset{x\in \Rn}{\minimize}
& & \sum_{i=1}^{n/2} \left[100 (x_{2i}- x^2_{2i-1})^2 + (1- x_{2i-1})^2\right]
\\
& \text{subject to}
& & \norm{x}^2=1.
\end{aligned}
\end{equation}
The problem is smooth~\aref{assu_c3}, the cost function is lower
bounded~\aref{assu_lower_bound_f}, coercive~\aref{assu_bounded_sublevelsets},
and the constraint function satisfies~\aref{assu_LICQ} as a particular case of
Example~\ref{example:stiefel} with $p=1$.
The starting point is chosen as
\[
x_0 = \sqrt{\frac1n\left(1 + \dfrac{\varepsilon_0}{\sqrt{2}}\right)}\,
\mathbf{1}_n,
\]
in order to ensure $\norm{c(x_0)}= \varepsilon_0/\sqrt{2}$
(\aref{assu_x0_eps_feasible}).

\subsection*{First-order inner minimization: QPM-GD}
 
Figure~\ref{fig:rosenbrock1000} shows two variants of QPM-GD applied to
Problem~\eqref{eq:rosenbrock} in dimension $n=1000$. The blue curve represents
the adaptive tolerance~\eqref{eq:tau_k} in the subproblem (labelled
$\tau_k$), and the orange curve represent the constant 
tolerance~\eqref{eq:tolerance_eps1} (labelled $\varepsilon_1$).  It appears clearly that
the adaptive tolerance performs less inner iterations in each subproblem. To
reach an accuracy of $10\invv{3}$, the adaptive QPM uses $1570$ gradients steps
and the non-adaptive QPM uses $3259$ gradient steps.
Table~\ref{tab:run_rosenbrock_gd_tau} shows that both methods terminate with
the same accuracy when the tolerance is set to $10^{-6}$, and reports the
markedly smaller number of oracle calls for the adaptive version.

\begin{figure}[ht]
\centering
\includegraphics[scale=0.5]{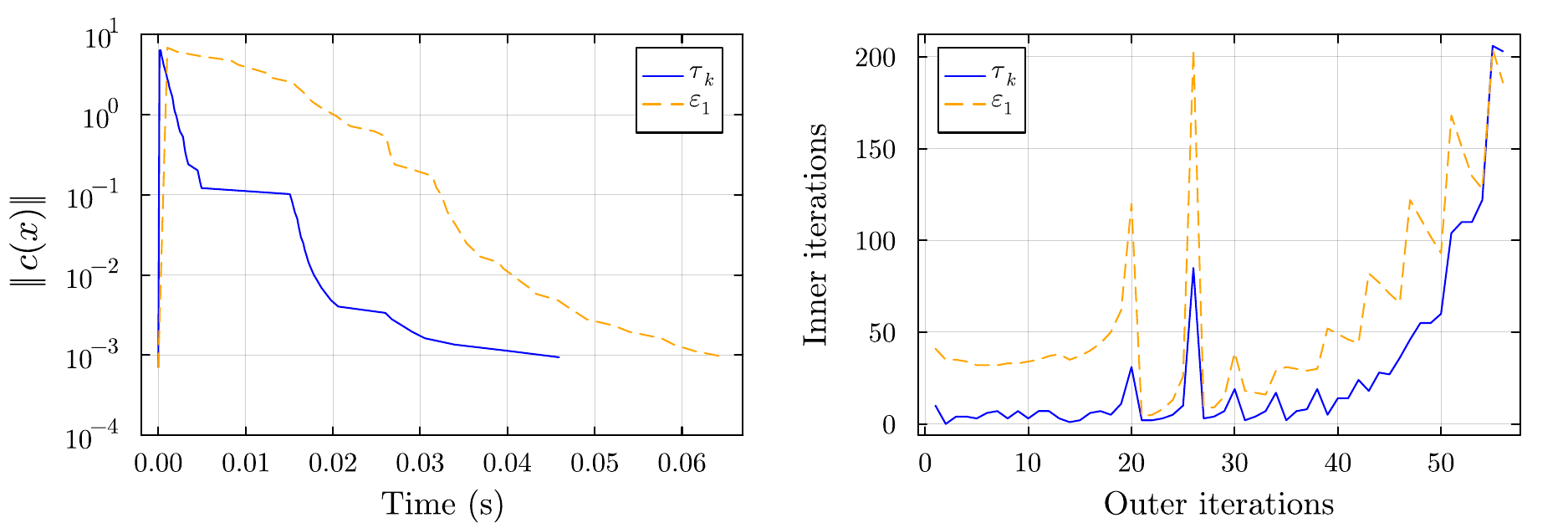}
\caption{Comparing QPM-GD with adaptive and fixed subproblem tolerance on
Problem~\eqref{eq:rosenbrock} with  $\beta_0 = 1$, $\alpha = 1.2$, $n=10^3$,
$\varepsilon_0 = \varepsilon_1 = 10\invv{3}$. Total number of inner iterations:
$1570$ for $\tau_k$ and $3259$ for $\varepsilon_1$.}
  \label{fig:rosenbrock1000}
\end{figure}

\begin{table}[ht] 
\centering
\begin{tabular}{l r  r r r r}
Method & $\norm{c(x_{\mathrm{final}})}$  & $f(x_{\mathrm{final}})$ &
$Q_{\beta}$ eval & $\nabla Q_{\beta}$ eval  \\
\hline
QPM-GD ($\tau_k$) & $7.1 \times 10^{-7}$  & $515.76$ & 8441 & 4583 \\
QPM-GD ($\varepsilon_1$) & $7.1 \times 10^{-7}$  & 515.76 & 12079 & 7771 \\
\end{tabular}
\caption{QPM-GD with $\tau_k$ vs $\varepsilon_1$ on
Problem~\eqref{eq:rosenbrock} with $n = 1000$, $\varepsilon_0 = \varepsilon_1 =
10\invv{6}$.}
\label{tab:run_rosenbrock_gd_tau}
\end{table}


\subsection*{Second-order inner minimization: QPM-TR}

Figure~\ref{fig_rosenbrock1000GDTR} shows the performance of QPM-GD and QPM-TR
on the same instance of Problem~\eqref{eq:rosenbrock} with $n=1000$, both use the
adaptive tolerance in the subproblems. It is apparent that QPM-TR takes much
less time to solve the problem. Additionally,
Table~\ref{tab:run_rosenbrock_gd_tr} shows that the second-order method
terminates with a smaller function value than the first-order method.

\begin{figure}[ht] 
\centering
\includegraphics[scale=0.5]{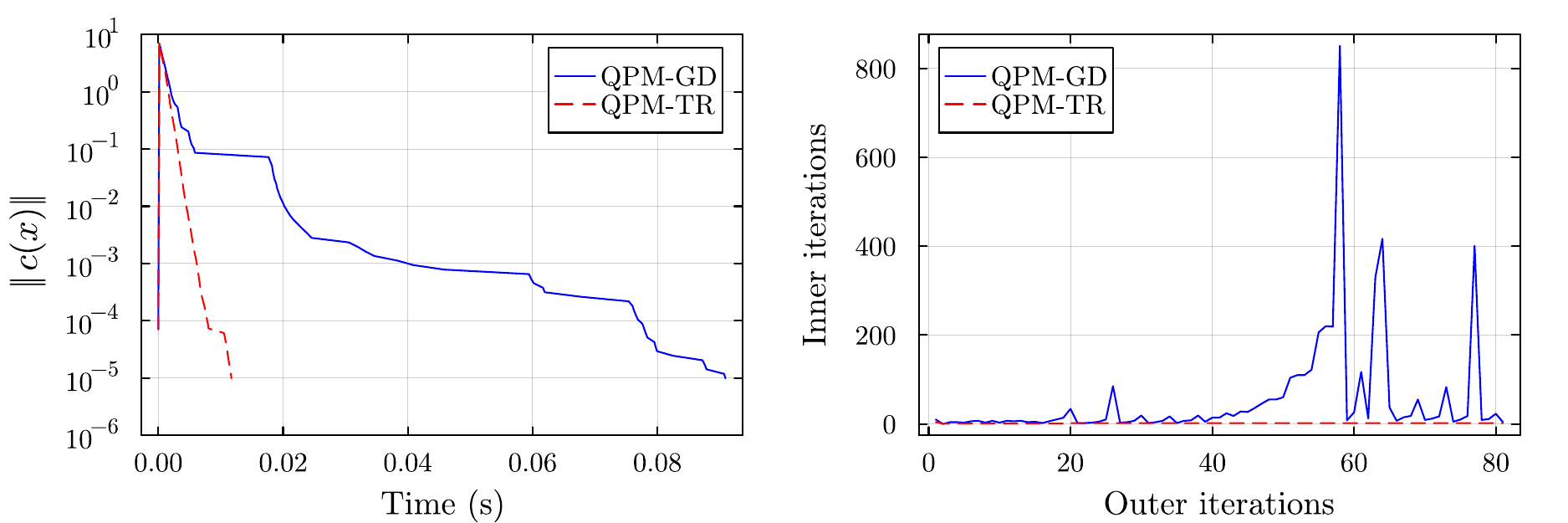}
\caption{Comparing QPM-GD and QPM-TR, both with adaptive subproblem tolerance
on Problem~\eqref{eq:rosenbrock} with $\beta_0 = 1$, $\alpha = 1.2$, $n=10^3$,
$\varepsilon_0 = \varepsilon_1 = 10\invv{5}$. Total number of inner iterations:
$4316$ for QPM-GD and $146$ for QPM-TR.}
  \label{fig_rosenbrock1000GDTR}
\end{figure}

\begin{table}[ht]
\centering
\begin{tabular}{l r r r r r r}
Method & $\norm{c(x_{\mathrm{final}})}$ & $f(x_{\mathrm{final}})$ & $Q_{\beta}$
eval & $\nabla Q_{\beta}$ eval & $\nabla^2 Q_{\beta}$ eval \\
\hline
QPM-GD ($\tau_k$) & $7.1 \times 10^{-7}$  & 515.76 & 8441 & 4583 & 0 \\
QPM-TR ($\tau_k$) & $9.2 \times 10^{-7}$  & 456.31 & 548 & 265 & 262 \\
\end{tabular}
\caption{QPM-GD vs QPM-TR on Problem~\eqref{eq:rosenbrock} with $n = 1000$,
$\varepsilon_0 = \varepsilon_1 = 10\invv{6}$ and $\tau(x) =  \tau_k$.}
\label{tab:run_rosenbrock_gd_tr}
\end{table}

\FloatBarrier

\section*{Conclusions}

In this work, we analyzed the worst-case oracle complexity of the Quadratic
Penalty Method (QPM) for smooth, nonconvex, equality-constrained optimization
problems, both with and without the PL condition on the constraint violation.
In the absence of PL, we established complexity bounds of
$\tilde{\calO}(\varepsilon_{0}^{-2}\varepsilon_{1}^{-2})$ and
$\tilde{\mathcal{O}}(\varepsilon_{0}^{-1}\varepsilon_{1}^{-3/2})$ for
obtaining~\ref{eq:focp} points when QPM is equipped with suitable first- and
second-order inner solvers, respectively.
Under the PL condition on the constraint violation, these bounds improve to
$\tilde{\calO}(\varepsilon_{0}^{-1}\varepsilon_{1}^{-2})$ and
$\tilde{\calO}(\varepsilon_{0}^{-1/2}\varepsilon_{1}^{-3/2})$, reflecting the
sharper dependence on feasibility accuracy afforded by the regularity
assumption. In both regimes, we further showed that the logarithmic dependence
on $\varepsilon_{0}$ vanishes when the initial penalty parameter is chosen
proportional to a suitable power of $\varepsilon_{0}^{-1}$. Our analysis
accommodates variants of QPM that use relaxed stopping criteria for the
subproblems. Leveraging this flexibility, we proposed a feasibility-aware
stopping rule that adaptively loosens the stationarity accuracy when far from
feasibility. This criterion preserves all theoretical guarantees and can yield
substantial practical speedups, as illustrated in our preliminary numerical
experiments.

\bibliographystyle{apalike} 
\bibliography{QPMLibrary.bib}   

\begin{thebibliography}{}

\end{thebibliography}


\begin{thebibliography}{}

\bibitem[Andreani et~al., 2008]{andreani2008augmented}
Andreani, R., Birgin, E.~G., Martínez, J.~M., and Schuverdt, M.~L. (2008).
\newblock On {{Augmented Lagrangian Methods}} with {{General Lower-Level
  Constraints}}.
\newblock {\em SIAM Journal on Optimization}, 18(4):1286--1309.

\bibitem[Bertsekas, 1997]{bertsekas1997Nonlinear}
Bertsekas, D.~P. (1997).
\newblock Nonlinear programming.
\newblock {\em Journal of the Operational Research Society}, 48(3):334--334.

\bibitem[Birgin et~al., 2016]{birgin2016Evaluation}
Birgin, E.~G., Gardenghi, J.~L., Martínez, J.~M., Santos, S.~A., and Toint,
  {\relax Ph}.~L. (2016).
\newblock Evaluation {{Complexity}} for {{Nonlinear Constrained Optimization
  Using Unscaled KKT Conditions}} and {{High-Order Models}}.
\newblock {\em SIAM Journal on Optimization}, 26(2):951--967.

\bibitem[Birgin and Martínez, 2020]{birgin2020Complexity}
Birgin, E.~G. and Martínez, J.~M. (2020).
\newblock Complexity and performance of an {{Augmented Lagrangian}} algorithm.
\newblock {\em Optimization Methods and Software}, 35(5):885--920.

\bibitem[Boumal et~al., 2019]{boumal2019global}
Boumal, N., Absil, P.-A., and Cartis, C. (2019).
\newblock Global rates of convergence for nonconvex optimization on manifolds.
\newblock {\em IMA Journal of Numerical Analysis}, 39(1):1--33.

\bibitem[Bueno and Martínez, 2020]{bueno2020Complexity}
Bueno, L.~F. and Martínez, J.~M. (2020).
\newblock On the {{Complexity}} of an {{Inexact Restoration Method}} for
  {{Constrained Optimization}}.
\newblock {\em SIAM Journal on Optimization}, 30(1):80--101.

\bibitem[Cartis et~al., 2011]{cartis2011evaluation}
Cartis, C., Gould, N.~I., and Toint, P.~L. (2011).
\newblock On the evaluation complexity of composite function minimization with
  applications to nonconvex nonlinear programming.
\newblock {\em SIAM Journal on Optimization}, 21(4):1721--1739.

\bibitem[Cartis et~al., 2013]{cartisEvaluationComplexityCubic2013}
Cartis, C., Gould, N.~I., and Toint, P.~L. (2013).
\newblock On the evaluation complexity of cubic regularization methods for
  potentially rank-deficient nonlinear least-squares problems and its relevance
  to constrained nonlinear optimization.
\newblock {\em SIAM Journal on Optimization}, 23(3):1553--1574.

\bibitem[Cartis et~al., 2014]{cartis2014Complexity}
Cartis, C., Gould, N.~I., and Toint, P.~L. (2014).
\newblock On the complexity of finding first-order critical points in
  constrained nonlinear optimization.
\newblock {\em Mathematical Programming}, 144(1):93--106.

\bibitem[Cartis et~al., 2022]{cartis2022evaluation}
Cartis, C., Gould, N. I.~M., and Toint, P.~L. (2022).
\newblock Evaluation complexity of algorithms for nonconvex optimization.
\newblock {\em MOS-SIAM Series on Optimization}.

\bibitem[Conn et~al., 2000]{conn2000trust}
Conn, A.~R., Gould, N.~I., and Toint, P.~L. (2000).
\newblock {\em Trust Region Methods}, volume~1.
\newblock SIAM.

\bibitem[Curtis et~al., 2024]{curtis2024Worstcase}
Curtis, F.~E., O'Neill, M.~J., and Robinson, D.~P. (2024).
\newblock Worst-case complexity of an {{SQP}} method for nonlinear equality
  constrained stochastic optimization.
\newblock {\em Mathematical Programming}, 205(1):431--483.

\bibitem[Curtis et~al., 2018]{curtis2018Complexity}
Curtis, F.~E., Robinson, D.~P., and Samadi, M. (2018).
\newblock Complexity {{Analysis}} of a {{Trust Funnel Algorithm}} for
  {{Equality Constrained Optimization}}.
\newblock {\em SIAM Journal on Optimization}, 28(2):1533--1563.

\bibitem[Eckstein and Silva, 2013]{eckstein2013Practical}
Eckstein, J. and Silva, P. J.~S. (2013).
\newblock A practical relative error criterion for augmented {{Lagrangians}}.
\newblock {\em Mathematical Programming}, 141(1-2):319--348.

\bibitem[Facchinei et~al., 2021]{facchinei2021Ghost}
Facchinei, F., Kungurtsev, V., Lampariello, L., and Scutari, G. (2021).
\newblock Ghost {{Penalties}} in {{Nonconvex Constrained Optimization}}:
  {{Diminishing Stepsizes}} and {{Iteration Complexity}}.
\newblock {\em Mathematics of Operations Research}, 46(2):595--627.

\bibitem[Goyens et~al., 2024]{goyens2024computing}
Goyens, F., Eftekhari, A., and Boumal, N. (2024).
\newblock Computing {{Second-Order Points Under Equality Constraints}}:
  {{Revisiting Fletcher}}’s {{Augmented Lagrangian}}.
\newblock {\em Journal of Optimization Theory and Applications},
  201(3):1198--1228.

\bibitem[Goyens and Royer, 2025]{goyens2025riemannian}
Goyens, F. and Royer, C.~W. (2025).
\newblock Riemannian trust-region methods for strict saddle functions with
  complexity guarantees.
\newblock {\em Mathematical Programming}, 213(1):863--905.

\bibitem[Grapiglia, 2023]{grapiglia2023Worstcase}
Grapiglia, G.~N. (2023).
\newblock Worst-case evaluation complexity of a quadratic penalty method for
  nonconvex optimization.
\newblock {\em Optimization Methods and Software}, 38(4):781--803.

\bibitem[Grapiglia and Yuan, 2021]{grapiglia2021complexity}
Grapiglia, G.~N. and Yuan, Y.-x. (2021).
\newblock On the complexity of an augmented {{Lagrangian}} method for nonconvex
  optimization.
\newblock {\em IMA Journal of Numerical Analysis}, 41(2):1508--1530.

\bibitem[Hamad and Hinder, 2025]{hamad2025Simple}
Hamad, F. and Hinder, O. (2025).
\newblock A simple and practical adaptive trust-region method.
\newblock \url{http://arxiv.org/abs/2412.02079}.

\bibitem[He et~al., 2023]{he2023NewtonCG}
He, C., Lu, Z., and Pong, T.~K. (2023).
\newblock A {{Newton-CG Based Augmented Lagrangian Method}} for {{Finding}} a
  {{Second-Order Stationary Point}} of {{Nonconvex Equality Constrained
  Optimization}} with {{Complexity Guarantees}}.
\newblock {\em SIAM Journal on Optimization}, 33(3):1734--1766.

\bibitem[Kong et~al., 2019]{kong2019Complexity}
Kong, W., Melo, J.~G., and Monteiro, R. D.~C. (2019).
\newblock Complexity of a {{Quadratic Penalty Accelerated Inexact Proximal
  Point Method}} for {{Solving Linearly Constrained Nonconvex Composite
  Programs}}.
\newblock {\em SIAM Journal on Optimization}, 29(4):2566--2593.

\bibitem[Li et~al., 2021]{li2021Rateimproved}
Li, Z., Chen, P.-Y., Liu, S., Lu, S., and Xu, Y. (2021).
\newblock Rate-improved inexact augmented {{Lagrangian}} method for constrained
  nonconvex optimization.
\newblock In {\em Proceedings of {{The}} 24th {{International Conference}} on
  {{Artificial Intelligence}} and {{Statistics}}}, pages 2170--2178. PMLR.

\bibitem[Li and Xu, 2021]{li2021augmented}
Li, Z. and Xu, Y. (2021).
\newblock Augmented {{Lagrangian}}–{{Based First-Order Methods}} for
  {{Convex-Constrained Programs}} with {{Weakly Convex Objective}}.
\newblock {\em INFORMS Journal on Optimization}, 3(4):373--397.

\bibitem[Lin et~al., 2020]{lin2020inexact}
Lin, Q., Ma, R., and Xu, Y. (2020).
\newblock Inexact {{Proximal-Point Penalty Methods}} for {{Constrained
  Non-Convex Optimization}}.
\newblock \url{http://arxiv.org/abs/1908.11518}.

\bibitem[Martínez, 2017]{martinez2017Highorder}
Martínez, J.~M. (2017).
\newblock On {{High-order Model Regularization}} for {{Constrained
  Optimization}}.
\newblock {\em SIAM Journal on Optimization}, 27(4):2447--2458.

\bibitem[Melo et~al., 2020]{melo2020Iterationcomplexity}
Melo, J.~G., Monteiro, R. D.~C., and Wang, H. (2020).
\newblock Iteration-complexity of an inexact proximal accelerated augmented
  {{Lagrangian}} method for solving linearly constrained smooth nonconvex
  composite optimization problems.
\newblock \url{http://arxiv.org/abs/2006.08048}.

\bibitem[Xie and Wright, 2021]{xie2021complexity}
Xie, Y. and Wright, S.~J. (2021).
\newblock Complexity of proximal augmented {{Lagrangian}} for nonconvex
  optimization with nonlinear equality constraints.
\newblock {\em Journal of Scientific Computing}, 86(3):1--30.

\bibitem[Zangwill, 1967]{zangwill1967NonLinear}
Zangwill, W.~I. (1967).
\newblock Non-{{Linear Programming Via Penalty Functions}}.
\newblock {\em Management Science}, 13(5):344--358.

\end{thebibliography}

\end{document}